\newcounter{theorem}
\newtheorem{thm}[theorem]{Theorem}
\newtheorem{lemma}[theorem]{Lemma}
\newtheorem{prop}[theorem]{Proposition}
\newtheorem{cor}[theorem]{Corollary}
\newtheorem{defn}[theorem]{Definition}
\newtheorem{thmx}{Theorem}
\theoremstyle{remark}
\newtheorem*{remark*}{Remark}
\newtheorem{remark}[theorem]{Remark}
\newtheorem{example}[theorem]{Example}
\numberwithin{equation}{section}
\numberwithin{theorem}{section}
\newcommand{\e}{\epsilon}
\newcommand{\dl}{\delta}
\newcommand{\N}{\mathbb{N}}
\newcommand{\cn}{\mathcal{N}}
\newcommand{\unit}{\mathbf{1}}
\renewcommand{\setminus}{\backslash}
\newcommand{\centredrelation}[2]{{}_{\phantom{#2}}#1_{#2}}
\newcommand{\capprox}[1]{\centredrelation{\approx}{#1}}
\begin{document}

\title{Almost Finiteness, comparison, and Tracial $\mathcal{Z}$-Stability}

\author{Hung-Chang Liao}
\address{Department of Mathematics and Statistics, University of Ottawa,
	\newline 150 Louis-Pasteur Pvt, Ottawa, ON, Canada K1N 6N5}
\email{hliao@uottawa.ca}

\author{Aaron Tikuisis}
\address{Department of Mathematics and Statistics, University of Ottawa,
	\newline 150 Louis-Pasteur Pvt, Ottawa, ON, Canada K1N 6N5}
\email{aaron.tikuisis@uottawa.ca}

\thanks{MSC 2010: 54H20, 46L35}
\thanks{\textit{Keywords: Topological dynamics; Cartan subalgebras of $C^*$-algebras; $\mathcal{Z}$-stability; comparison; almost finiteness}}
\maketitle

\begin{abstract} Inspired by Kerr's work on topological dynamics, we define tracial $\mathcal{Z}$-stability for sub-$C^*$-algebras. We prove that for a countable discrete amenable group $G$ acting freely and minimally on a compact metrizable space $X$, tracial $\mathcal{Z}$-stability for the sub-$C^*$-algebra $(C(X)\subseteq C(X)\rtimes G)$ implies that the action has dynamical comparison. Consequently, tracial $\mathcal{Z}$-stability is equivalent to almost finiteness of the action, provided that the action has the small boundary property. 
\end{abstract}

\section*{Introduction}
\renewcommand{\thethmx}{\Alph{thmx}}
\renewcommand{\thecorx}{\Alph{corx}}

Operator algebras and dynamical systems have long enjoyed a rich and sometimes surprising relationship. Murray and von Neumann's uniqueness theorem on hyperfinite II$_1$ factors (\cite{MvN}) is paralleled by Dye's uniqueness of hyperfinite ergodic measure-preserving equivalence relations (\cite{Dye}). Connes' celebrated ``injectivity implies hyperfiniteness'' theorem (\cite{Connes}) also has a direct analogue --- the Connes--Feldman--Weiss theorem --- which asserts that amenable nonsingular measured equivalence relations are hyperfinite (\cite{CFW}). The connection between these two fields goes far beyond mere analogies.
There is a natural construction of a von Neumann algebra from a measure-preserving equivalence relation, and amenable (respectively, hyperfinite) equivalence relations are exactly those whose associated von Neumann algebras are amenable (respectively, hyperfinite). In fact, Feldman and Moore showed in \cite{FM2} that there is a full-fledged correspondence between Cartan subalgebras of von Neumann algebras and (twisted) measured equivalence relations.

The same types of analogies and connections are emerging between $C^*$-algebras and topological dynamics.
Here the interest largely surrounds properties in the Toms--Winter conjecture --- a $C^*$-algebraic conjecture designed to create a robust notion of regularity, primarily for the purpose of identifying classifiable $C^*$-algebras.\footnote{Amenability is also of interest, but developments on both the $C^*$- and dynamical sides (e.g., \cite{Villadsen,GiolKerr}) show that amenability alone has weaknesses in terms of classifying $C^*$-algebras. Hence the question here is: under the base assumption of amenability, what additional conditions are needed to ensure regularity?} More precisely, the Toms--Winter conjecture predicts that for simple unital separable non-elementary nuclear $C^*$-algebras, the following are equivalent:
\begin{enumerate}
	\item[(C1)] finite nuclear dimension;
	\item[(C2)] $\mathcal{Z}$-stability;
	\item[(C3)] strict comparison.
\end{enumerate}
The implications (C1) $\Rightarrow$ (C2) and (C2) $\Rightarrow$ (C3) were established in full generality in \cite{Winter12} and \cite{Rordam:IJM}, respectively. (C2) $\Rightarrow$ (C1) was recently settled in \cite{CETW}, building on the groundbreaking work of Matui and Sato (\cite{MatuiSato:Duke}). The only implication which is not completely settled is (C3) $\Rightarrow$ (C2). The most general result at the moment was obtained in \cite{CETW} (based on another fundamental work of Matui and Sato, \cite{MS:Acta}): strict comparison implies $\mathcal{Z}$-stability provided that the $C^*$-algebra has ``uniform property $\Gamma$''.

On the dynamical side, Kerr defined for actions of countable discrete amenable groups on compact metrizable spaces the following properties (\cite{Kerr:JEMS}), which are analogous to the three conditions in the Toms--Winter conjecture:
\begin{enumerate}
	\item[(D1)] finite tower dimension;
	\item[(D2)] almost finiteness (this partially generalizes Matui's almost finiteness for topological groupoids with totally disconnected unit spaces);
	\item[(D3)] dynamical comparison (comparison for topological dynamics first appeared in talks of Wilhelm Winter).
\end{enumerate}
Kerr proved that (among other things) for free minimal actions
\begin{enumerate}
	\item[(1)] almost finiteness implies dynamical comparison;
	\item[(2)] if the action is almost finite, then the crossed product is (tracially) $\mathcal{Z}$-stable.
\end{enumerate}
In \cite{KerrSzabo}, Kerr and Szab\'o showed a relationship between almost finiteness and dynamical comparison which is reminiscent of the relationship between $\mathcal{Z}$-stability and strict comparison above. More precisely, they showed that dynamical comparison implies almost finiteness provided that the action has the small boundary property in the sense of \cite{LindenstraussWeiss}.

Inspired by the apparent analogy between almost finiteness and $\mathcal Z$-stability, we sought to establish a formal link here, provided the Cartan structure is taken into account on the $C^*$-algebraic side.
We took a McDuff-type characterization of $\mathcal Z$-stability for nuclear $C^*$-algebras due to Hirshberg and Orovitz (called tracial $\mathcal Z$-stability, \cite{HirshbergOrovitz}, and their characterization makes key use of ideas of Matui and Sato, \cite{MS:Acta}), and strengthened it in ways naturally related to the Cartan structure, to produce a property we call \emph{tracial $\mathcal Z$-stability} for a sub-$C^*$-algebra (Definition \ref{defn:Zstable}). Our definition is, largely, inspired by a close analysis of Kerr's proof that minimal free actions of amenable groups which are almost finite give rise to $\mathcal{Z}$-stable crossed products. Our main result shows that this definition is precisely how the Cartan structure encodes almost finiteness, at least at the presence of the small boundary property.

\begin{thmx} [Corollary \ref{cor:AlmostFiniteZstable}] \label{thm:A}
	Let $G$ be a countable discrete amenable group, let $X$ be a compact metrizable space, and let $\alpha:G \curvearrowright X$ be a free minimal action. Consider the following conditions:
	\begin{enumerate}
		\item $\alpha$ is almost finite;
		\item $(C(X) \subseteq C(X)\rtimes_\alpha G )$ is tracially $\mathcal Z$-stable;
		\item $\alpha$ has dynamical comparison.
	\end{enumerate}
	Then $(i)\Rightarrow (ii) \Rightarrow (iii)$, and if $\alpha$ has the small boundary property then all three conditions are equivalent.
\end{thmx}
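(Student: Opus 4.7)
The plan is to split the cycle into three implications. The Kerr--Szab\'o theorem of \cite{KerrSzabo} furnishes $(iii)\Rightarrow(i)$ under the small boundary property, so once the two new implications $(i)\Rightarrow(ii)$ and $(ii)\Rightarrow(iii)$ are in hand, the full equivalence follows.

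For $(i)\Rightarrow(ii)$, I would unpack almost finiteness as the statement that for any finite $K\subseteq G$, any $\varepsilon>0$, and any $n\in\N$, there is an open castle whose shapes are simultaneously $(K,\varepsilon)$-invariant and admit an approximate tiling by $n$-element sub-shapes with common shift structure, and whose remainder has arbitrarily small mass under every invariant measure (equivalently, arbitrarily small upper Banach density). On each tower, such a tiling supplies a system of $C(X)$-normalizing partial isometries inside $C(X)\rtimes_\alpha G$ that behave like matrix units for $M_n$. Assembling them across towers produces a completely positive contractive order-zero map $\phi:M_n\to C(X)\rtimes_\alpha G$ whose image is compatible with the Cartan inclusion $C(X)\subseteq C(X)\rtimes_\alpha G$. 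The $(K,\varepsilon)$-invariance forces $\phi$ to approximately commute with any prescribed finite subset of the crossed product, while the castle remainder being small makes $1-\phi(1)$ Cuntz-subequivalent to any preassigned nonzero positive test element by a standard tracial argument. This is essentially a Cartan-refined version of Kerr's proof that almost finite actions give rise to $\mathcal Z$-stable crossed products.

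For $(ii)\Rightarrow(iii)$, take open $U,V\subseteq X$ with $\mu(U)<\mu(V)$ for every $\alpha$-invariant Borel probability measure $\mu$. By upper semicontinuity and compactness of the simplex of invariant measures there is a uniform gap $\eta>0$ with $\mu(V)-\mu(U)>\eta$ for all such $\mu$. Choose $n$ large enough that $1/n\ll\eta$, and apply the Cartan-respecting tracial $\mathcal Z$-stability to positive contractions in $C(X)$ cut out by $U$ and $V$. This produces an order-zero map $\phi:M_n\to C(X)\rtimes_\alpha G$ whose matrix units are $C(X)$-normalizing and whose defect $1-\phi(1)$ is Cuntz-subequivalent to an element supported inside $V$. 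Unpacking the matrix-unit structure through the Cartan subalgebra then yields a finite open refinement of $U$ together with group elements whose translates sit disjointly inside $V$, realising the dynamical subequivalence $\unit_U\preceq\unit_V$ and hence dynamical comparison.

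The main technical obstacle will be $(ii)\Rightarrow(iii)$: extracting a concretely dynamical statement (open refinement plus translating group elements) from an a priori abstract $C^*$-algebraic hypothesis. This is exactly where strengthening Hirshberg--Orovitz to the Cartan setting is essential, since without the normalizer-like compatibility of $\phi$ with $C(X)$, the matrix units supplied by tracial $\mathcal Z$-stability would at best give Cuntz comparison inside the crossed product, which need not descend to dynamical comparison in $X$.
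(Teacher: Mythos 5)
Your overall decomposition matches the paper's: (iii)$\Rightarrow$(i) under the small boundary property is \cite[Theorem 6.1]{KerrSzabo}, (i)$\Rightarrow$(ii) is extracted from Kerr's proof of \cite[Theorem 12.4]{Kerr:JEMS}, and the genuinely new content is (ii)$\Rightarrow$(iii). But your sketch of (ii)$\Rightarrow$(iii) has a real gap: nothing in it converts the uniform measure gap $\eta$ into any subequivalence. Tracial $\mathcal Z$-stability applied to functions cut out by $U$ and $V$ gives an order zero map $\phi:M_n\to C(X)\rtimes_\alpha G$ with normalizing image and a defect $\unit-\phi(\unit_n)$ subequivalent to a prescribed small element supported in $V$, but this by itself compares nothing supported in $U$ with anything supported in $V$; choosing $n$ with $1/n\ll\eta$ does no work on its own. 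The paper's proof (Theorem \ref{thm:ZstableComparison}) needs an \emph{amplified} dynamical subequivalence as input before $\phi$ ever enters: by \cite[Theorem 3.9]{Ma:ETDS}, the strict gap $d_\tau(f)<d_\tau(g)$ for all traces is equivalent to $(n+1)[f]\leq n[g]$ in the generalized type semigroup $W(X,G)$ for some $n$, whence $\unit_{M_n}\otimes f\preccurlyeq \unit_{M_n}\otimes g$; Proposition \ref{prop:DynamicalCuntzCharacterization} turns this into an exact algebraic witness $v$ which is an $r$-normalizer of $D_n\otimes C(X)$, and the order zero map is then used only to divide by $n$: one compresses to $r=\sum_{i,j}\phi(e_{ii})v_{ij}\phi(e_{ij})$, absorbs $\unit-\phi(\unit_n)^4$ into a small piece of $g$ (after first arranging, via minimality, that $0$ is not isolated in the spectrum of $g$, so that $g$ splits along orthogonal $\hat g,\hat d$), and glues $t=\hat g r+\hat d s$, an $r$-normalizer with $t^*gt\approx f$; Proposition \ref{prop:DynamicalCuntzCharacterization} then translates back to $f\preccurlyeq g$. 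Your final ``unpacking'' step is precisely the hard direction of that proposition (via Corollary \ref{cor:MatrixSeminormalizerCoeffs}: row-wise disjointness of the coefficient supports of a matrix $r$-normalizer), so even that part requires the Section \ref{sec:Preorder} machinery rather than generic Cartan compatibility --- though you are right that the normalizer condition is the essential strengthening of Hirshberg--Orovitz, and that plain Cuntz comparison in the crossed product would not descend to $X$.

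There is also a circularity in your (i)$\Rightarrow$(ii). You unpack almost finiteness as ``castle whose remainder has small mass under every invariant measure'' and claim $\unit-\phi(\unit_n)$ is subequivalent to the test element ``by a standard tracial argument''; but measure-smallness yields subequivalence only in the presence of comparison, which is part of the conclusion of the theorem, not an available tool. Kerr's definition of almost finiteness builds in the dynamical subequivalence $X\setminus\coprod_{i} S_iV_i \prec \coprod_i S_i'V_i$ directly, and his Lemma 12.3 converts it into a Cuntz subequivalence whose witness is, as noted in Remark \ref{rmk:Kerr12.3}, already an $r$-normalizer; that observation, together with the verification (Remark \ref{remark:KerrOrderZeroMap}) that Kerr's order zero map is a castle order zero map and hence maps $\mathcal N_{M_n}(D_n)$ into $\mathcal N_{C(X)\rtimes_\alpha G}(C(X))$, is exactly how the paper upgrades Kerr's proof to the sub-$C^*$-algebra statement. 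The measure-theoretic reformulation you describe is equivalent to Kerr's almost finiteness only under the small boundary property (by \cite{KerrSzabo}), so as written this direction has a gap as well, albeit a fixable one.
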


The key novelty in our definition lies in the use of ``one-sided normalizers''. The study of (two-sided) normalizers has a long history in operator algebras, and they play a fundamental role in the connection between dynamics and operator algebras. Indeed, Feldman and Moore proved in \cite{FM2} that one can reconstruct an equivalence relation from its associated Cartan subalgebra and normalizers (similar results in the $C^*$-setting were obtained in \cite{Kumjian:CJM} and \cite{Renault:IMSB}). In this paper we show that one-sided normalizers also carry a significant amount of dynamical information. In particular, the dynamical subequivalence that appears in the definition of dynamical comparison can be completely characterized using one-sided normalizers (Proposition \ref{prop:DynamicalCuntzCharacterization}).

The paper is organized as follows. In Section \ref{sec:Prelim}, we establish our notation and record several facts about sub-$C^*$-algebras and normalizers.
In Section \ref{sec:Preorder} we study one-sided normalizers and prove a $C^*$-characterization of the dynamical subequivalence defined in \cite{Kerr:JEMS}  using these normalizers. (As mentioned in \cite{Kerr:JEMS}, this subequivalence relation first appeared in talks of Winter.) In Section \ref{sec:Z} we prove Theorem \ref{thm:A}  (as Corollary \ref{cor:AlmostFiniteZstable}).

\subsection*{Acknowledgements}
This research is supported by an NSERC Discovery Grant.
H.L.\ is also supported by the Fields Institute.

\section{Preliminaries}
\label{sec:Prelim}

For a $C^*$-algebra $A$ we write $A_+$ for the set of positive elements in $A$, $A^1$ for the set of elements of norm at most $1$, and $A_+^1$ for the intersection (the set of positive contractions).
We write $T(A)$ for the set of tracial states on $A$. For two elements $a,b$ in $A$ and $\eta > 0$ we write $a\approx_\eta b$ if $\| a -b\| < \eta$.

\begin{defn}
	A \emph{sub-$C^*$-algebra} $(D\subseteq A)$ refers to a $C^*$-algebra $A$ together with a $C^*$-subalgebra $D$. We say a sub-$C^*$-algebra $(D\subseteq A)$ is \emph{nondegenerate} if $D$ contains an approximate unit for $A$.
\end{defn}

\begin{defn} \label{defn:normalizer}
	Let $(D\subseteq A)$ be a sub-$C^*$-algebra. An element $a\in A$ is said to \emph{normalize} $D$ if $a^*Da + aDa^* \subseteq D$ (we also say $a$ is a \emph{normalizer} of $D$ in $A$). The set of normalizers of $D$ in $A$ is denoted by $\cn_A(D)$.
\end{defn}

It follows directly from the definition that the set of normalizers is closed under multiplication, involution, and norm-limits.

Although we won't explicitly need the definition, we recall the notion of a ($C^*$-algebra) Cartan subalgebra, as this is main context to keep in mind when we work with sub-$C^*$-algebras.
A \emph{Cartan subalgebra} is a nondegenerate sub-$C^*$-algebra $(D\subseteq A)$ where $D$ is a maximal abelian subalgebra, such that there exists a faithful conditonal expectation $E:A \to D$, and such that $\mathcal N_A(D)$ generates $A$ as a $C^*$-algebra.
 
It is useful to know that, in many cases, if $a$ is a normalizer of $D$ then $a^*a,aa^*$ belong to $D$.
This is not true in general (for example, take $D = \{0\}$), but is under the assumption of nondegeneracy, as the following shows.

\begin{lemma}[{\cite[Lemma 4.6]{Renault:IMSB}}] \label{lem:NondegNormalizer}
	Let $(D\subseteq A)$ be a nondegenerate sub-$C^*$-algebra and $a\in \cn_A(D)$. Then $a^*a$ and $aa^*$ belong to $D$.
\end{lemma}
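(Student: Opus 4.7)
The plan is to exploit the nondegeneracy hypothesis directly through an approximate unit. Let $(e_\lambda)_{\lambda \in \Lambda}$ be an approximate unit for $A$ contained in $D$, which exists by nondegeneracy. By symmetry it suffices to prove $a^*a \in D$.

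First I would observe that, since $a \in \cn_A(D)$, each element $a^* e_\lambda a$ lies in $a^* D a \subseteq D$. Next I would show that the net $(a^* e_\lambda a)_\lambda$ converges in norm to $a^* a$: one has
\[
\| a^* e_\lambda a - a^*a \| \;=\; \| a^*(e_\lambda a - a) \| \;\leq\; \|a\|\, \| e_\lambda a - a \|,
\]
and $\| e_\lambda a - a \| \to 0$ because $(e_\lambda)$ is an approximate unit for $A$. Since $D$ is norm-closed in $A$, the limit $a^*a$ belongs to $D$.

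Applying the same argument to $a^*$ in place of $a$ (noting that $\cn_A(D)$ is closed under involution, so $a^* \in \cn_A(D)$ as well) yields $aa^* \in D$.

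There is no real obstacle here: the only point that requires any care is the identification of the norm-limit, which is a one-line estimate using the approximate unit property. The essential content of the lemma is that nondegeneracy allows one to ``insert'' an approximate identity from $D$ between $a^*$ and $a$, after which the normalizer condition immediately produces a net in $D$ with the desired limit.
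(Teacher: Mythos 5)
Your proposal is correct and follows essentially the same argument as the paper: insert an approximate unit $(e_\lambda)\subseteq D$ to get $a^*e_\lambda a\in D$, pass to the norm limit $a^*a$ using closedness of $D$, and repeat with $a^*$ for $aa^*$. The only difference is that you spell out the convergence estimate explicitly, which the paper leaves implicit.
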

\begin{proof}
	Let $(u_\lambda)_\lambda$ be an approximate unit in $D$ for $A$. Then by definition $a^*u_\lambda a$ is in $D$ for every $\lambda$. Since $\cn_A(D)$ is closed under norm-limits, we see that $a^*a\in D$. The same argument shows that $aa^*$ also belongs to $D$.
\end{proof}

Although a sum of normalizers is not necessarily a normalizer, it will be if the subalgebra is abelian and a certain orthogonality condition is satisfied.

\begin{lemma} \label{lem:SumOfNormalizers}
	Let $(D\subseteq A)$ be a sub-$C^*$-algebra with $D$ abelian, and $x_1,...,x_n$ be  normalizers of $D$ in $A$. Set $z = \sum_{i=1}^n x_i$.
	\begin{enumerate}
		\item If $x_i^*x_j = 0$ whenever $i \neq j$, then $z^*Dz \subseteq D$. 
		\item If $x_ix_j^* = 0$ whenever $i \neq j$, then $zDz^* \subseteq D$. 
	\end{enumerate}
	As a consequence, if $x_i^*x_j = 0 = x_ix_j^*$ whenever $i\neq j$, then $z$ belongs to $\cn_A(D)$.
\end{lemma}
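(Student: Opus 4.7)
My plan is to expand $z^* d z = \sum_{i,j} x_i^* d x_j$ and separate diagonal from off-diagonal terms. The diagonal pieces $x_i^* d x_i$ lie in $D$ directly from the normalizer property of each $x_i$, so the task reduces to handling the cross-terms, and I would aim to show that each one vanishes individually.

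To kill a cross-term $x_i^* d x_j$ with $i \neq j$, I would first observe that $x_i^* x_j = 0$, multiplied on the right by $x_j^*$, gives $x_i^* (x_j x_j^*) = 0$; approximating $t^{1/n}$ on the spectrum of $x_j x_j^*$ by polynomials with zero constant term and applying continuous functional calculus then upgrades this to $x_i^* (x_j x_j^*)^{1/n} = 0$ for every $n$. Invoking Lemma \ref{lem:NondegNormalizer} places $x_j x_j^*$ in $D$, so $(x_j x_j^*)^{1/n}$ belongs to the abelian algebra $D$ and hence commutes with $d$. Combined with the polar-decomposition identity $(x_j x_j^*)^{\alpha} x_j = x_j (x_j^* x_j)^{\alpha}$ (valid for $\alpha > 0$ by polynomial approximation) and the standard norm convergence $x_j (x_j^* x_j)^{1/n} \to x_j$, a short chain of rewrites turns $x_i^* d x_j (x_j^* x_j)^{1/n}$ into $x_i^* (x_j x_j^*)^{1/n} d x_j = 0$; passing to the limit yields $x_i^* d x_j = 0$.

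Part (2) should follow by a symmetric argument, or more efficiently by applying part (1) to the collection $\{x_i^*\}$ in place of $\{x_i\}$, since involution preserves the normalizer property and swaps the two orthogonality conditions. The concluding assertion is then immediate: when both hypotheses hold, (1) and (2) together give $z^* D z + z D z^* \subseteq D$, so $z \in \cn_A(D)$. I expect the main obstacle to be the careful bookkeeping in the computation that upgrades the unadorned orthogonality $x_i^* x_j = 0$ to the $D$-bilinear orthogonality $x_i^* d x_j = 0$; the crux is the use of Lemma \ref{lem:NondegNormalizer} to put $x_j x_j^*$ inside $D$, which is what allows $d$ to be commuted past $(x_j x_j^*)^{1/n}$ using abelianness of $D$.
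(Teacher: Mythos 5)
Your overall skeleton matches the paper's: expand $z^*dz$, note the diagonal terms $x_i^*dx_i$ lie in $D$ by the normalizer property, show each cross-term $x_i^*dx_j$ ($i\neq j$) vanishes, get part (ii) by applying part (i) to $\{x_i^*\}$, and deduce the final assertion. However, your mechanism for killing the cross-terms has a genuine gap: you invoke Lemma \ref{lem:NondegNormalizer} to place $x_jx_j^*$ in $D$, but that lemma requires $(D\subseteq A)$ to be \emph{nondegenerate}, and nondegeneracy is not among the hypotheses of Lemma \ref{lem:SumOfNormalizers} --- only abelianness of $D$ is assumed. Without nondegeneracy the membership $x_jx_j^*\in D$ is false in general; the paper flags exactly this in the paragraph preceding Lemma \ref{lem:NondegNormalizer} (take $D=\{0\}$), and a less degenerate example is $D=\C e_{11}\subseteq M_2$ with $x=e_{22}$: then $x^*Dx = xDx^* = \{0\}\subseteq D$, so $x\in\cn_{M_2}(D)$, yet $xx^*=e_{22}\notin D$. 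Since your entire chain of rewrites hinges on commuting $d$ past $(x_jx_j^*)^{1/n}$ via abelianness of $D$ --- which you yourself identify as the crux --- your argument only establishes the lemma under the additional standing hypothesis of nondegeneracy. (That hypothesis happens to hold in all of the paper's applications, so the damage is contained, but it is not part of the statement you were asked to prove.)

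The paper's proof supplies the missing idea: one never needs $x_ix_i^*$ itself to lie in $D$, only a \emph{double sandwich}. Two applications of the $C^*$-identity give $\|x_i^*dx_j\|^4 \leq \|x_j\|^2\,\|x_j^*d^*(x_ix_i^*dd^*x_ix_i^*)\,d\,x_j\|$, and the parenthesized middle element equals $x_i(x_i^*dd^*x_i)x_i^*$, which lies in $D$ by two uses of the normalizer property of $x_i$ alone (first $x_i^*dd^*x_i\in D$, then $x_iDx_i^*\subseteq D$). Abelianness of $D$ then lets $d$ commute past this element, after which the middle element, ending in $x_i^*$, annihilates $x_j$ because $x_i^*x_j=0$; hence $x_i^*dx_j=0$ without any nondegeneracy assumption. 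So either add nondegeneracy to your hypotheses (making your functional-calculus route correct but weaker than the stated lemma), or replace the step ``$x_jx_j^*\in D$'' by this squaring trick; the rest of your proposal, including the reduction of (ii) to (i) by involution and the concluding assertion, is fine as written.
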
   
\begin{proof}
	We only prove (i) as the other assertion is completely analogous. It suffices to show that if $i\neq j$ then $x_i^*dx_j = 0$ for every $d\in D$. For this observe that
	\begin{align*}
	\| x_i^*d x_j\|^4 &= \| x_j^*d^*x_ix_i^* d x_j \|^2 = \| x_j^*d^* x_ix_i^* d x_j	x_j^*d^*x_ix_i^* d x_j \| \\
	&\leq  \| x_j\|^2 \| x_j^*d^* x_ix_i^* dd^*x_ix_i^* d x_j \|.
	\end{align*}
	Note that 
	\[  x_ix_i^* dd^*x_ix_i^* = x_i(x_i^* dd^*x_i)x_i^* \in x_iDx_i^* \subseteq D.  \]
	Since $D$ is abelian,
	\[ x_j^*d^* (x_ix_i^* dd^*x_ix_i^*) d x_j = x_j^*d^*d( x_ix_i^* dd^*x_ix_i^* )x_j  = 0. \]
	It follows that $x_i^*dx_j = 0$ and the proof is complete.
\end{proof}

Throughout the paper we write $M_n$ for the algebra of all complex-valued $n$-by-$n$ matrices, and $D_n$ for the subalgebra of $M_n$ that consists of all diagonal matrices.

\begin{example}[{\cite[Example 2]{Kumjian:CJM}}]\label{exam:MatrixNormalizer}
	An element $a\in M_n$ belongs to $\cn_{M_n}(D_n)$ if and only if $a$ has at most one nonzero entry in each row and each column.
\end{example}

In this paper, normalizer-preserving maps between sub-$C^*$-al\-ge\-bras will play an essential role. 

\begin{lemma} \label{lem:NormalizersPreserving}
	Let $( D_A \subseteq A )$ and $(D_B \subseteq B)$ be two sub-$C^*$-algebras and $\phi:A\to B$ be a positive linear map. Suppose $(D_B\subseteq B)$ is nondegenerate.
	\begin{enumerate}
		\item If $\phi( \cn_A(D_A)  )\subseteq \cn_B(D_B)$, then $\phi(D_A)\subseteq D_B$.
		\item Suppose in addition that $(D_A\subseteq A)$ is nondegenerate and $\phi$ is a $^*$-homomorphism. If $\phi(\cn_A(D_A)) = \cn_B(D_B)$, then $\phi(D_A) = D_B$.
	\end{enumerate}
\end{lemma}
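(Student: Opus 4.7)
My plan for part (i) is to reduce to positive elements of $D_A$ and then extract $\phi(d)$ from $\phi(d)^2$ by a square-root argument. First I would observe that every $d \in D_A$ is trivially a normalizer of $D_A$ (since $d^* D_A d, d D_A d^* \subseteq D_A$ as $D_A$ is a subalgebra), so $D_A \subseteq \cn_A(D_A)$ and the hypothesis forces $\phi(D_A) \subseteq \cn_B(D_B)$. Since $\phi$ is positive and linear, it is automatically $^*$-preserving, so $\phi(D_A)$ is the linear span of $\phi((D_A)_+)$; it therefore suffices to show $\phi(d) \in D_B$ when $d \in (D_A)_+$. For such $d$, $\phi(d)$ is a positive normalizer of $D_B$, and nondegeneracy of $(D_B \subseteq B)$ together with Lemma \ref{lem:NondegNormalizer} gives
\[ \phi(d)^2 \;=\; \phi(d)^* \phi(d) \;\in\; D_B. \]
Since $D_B$ is a $C^*$-algebra, the continuous functional calculus applied to $\phi(d)^2$ recovers its unique positive square root $\phi(d)$ inside $D_B$.

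For part (ii), the inclusion $\phi(D_A) \subseteq D_B$ is immediate from part (i), since the current hypothesis is stronger. For the reverse inclusion I would exploit that, as $\phi$ is a $^*$-homomorphism, its image $\phi(D_A)$ is a $C^*$-subalgebra of $D_B$, so it is enough to show that $\phi(D_A)$ contains every positive $d \in D_B$. Given such $d$, the equality $\phi(\cn_A(D_A)) = \cn_B(D_B)$ applied to $d \in (D_B)_+ \subseteq \cn_B(D_B)$ produces some $a \in \cn_A(D_A)$ with $\phi(a) = d$. Nondegeneracy of $(D_A \subseteq A)$ and another application of Lemma \ref{lem:NondegNormalizer} then give $a^* a \in D_A$, so
\[ d^2 \;=\; \phi(a)^* \phi(a) \;=\; \phi(a^* a) \;\in\; \phi(D_A), \]
and functional calculus inside the $C^*$-subalgebra $\phi(D_A)$ yields $d \in \phi(D_A)$.

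I do not anticipate a genuine obstacle: the entire proof is a two-fold application of Lemma \ref{lem:NondegNormalizer} combined with the observation that $C^*$-subalgebras are closed under taking positive square roots of positive elements. The only small subtlety worth flagging is the automatic $^*$-preservation of a positive linear map in part (i), which is what lets the argument reduce cleanly to positive elements of $D_A$.
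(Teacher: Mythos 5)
Your proof is correct and follows essentially the same route as the paper's: both parts reduce to positive elements, apply Lemma \ref{lem:NondegNormalizer} via nondegeneracy, and recover the desired element from its square via functional calculus. The only cosmetic difference is in (ii), where the paper takes the square root upstairs, showing $\phi(|a|)=e$ directly using that $^*$-homomorphisms commute with continuous functional calculus, whereas you take it downstairs inside $\phi(D_A)$, which additionally invokes the standard fact that the image of a $^*$-homomorphism is a closed $C^*$-subalgebra.
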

\begin{proof}
	\begin{enumerate}
		\item Let $d$ be a positive element in $D_A$. Then $\phi(d)$ belongs to $\cn_B(D_B)$ because $D_A$ is contained in $\cn_A(D_A)$. Since $\phi$ is positive, $\phi(d)^2 = \phi(d)^*\phi(d)$ and the later belongs to $D_B$ by Lemma \ref{lem:NondegNormalizer}.  It follows that $\phi(d)\in D_B$ and from linearity we conclude that $\phi(D_A)\subseteq D_B$.
		\item By (i) we only need to show that $D_B\subseteq \phi(D_A)$, so let $e$ be a positive element in $D_B$. Since $D_B\subseteq \cn_B(D_B)$, by assumption we can find an element $a\in \cn_A(D_A)$ such that $\phi(a) = e$. Using the fact that $\phi$ is a $^*$-homomorphism, we see that
		\[ \phi(|a|) = \phi( (a^*a)^{ \frac{1}{2} } ) =  (\phi(a)^*\phi(a  ))^{\frac{1}{2}} = (e^2)^{\frac{1}{2} } = e.    \] As $(D_A\subseteq A)$ is nondegenerate, $|a|$ belongs to $D_A$ by Lemma \ref{lem:NondegNormalizer} and the proof is complete.
	\end{enumerate}
\end{proof}

\begin{remark}
	In the previous lemma nondegeneracy is necessary in both assertions. For example, if we take $D_B = \{0\}$ then any map from $A$ into $B$ is normalizer-preserving (because $\cn_B(D_B) = B$) but in most cases it does not map $D_A$ into $D_B$. For (ii) we can take $D_A = \{0\}$ and $D_B = B$. Then any surjective $^*$-homomorphism from $A$ onto $B$ maps $\cn_A(D_A)$ (which is all of $A$) onto $\cn_B(D_B)$ (which is all of $B$), but the image of $D_A$ is $\{0\}$.
\end{remark}

Recall that a c.p.c.\ map $\phi:A\to B$ be two $C^*$-algebras is \emph{order zero} if $\phi(a)\phi(b) = 0$ whenever $a,b$ are positive elements in $A$ satisfying $ab = 0$. Using the structure theorem (\cite[Theorem 3.3]{WZ:Muenster}) we see that order zero maps in fact preserve arbitrary orthogonality: $\phi(a)\phi(b) = 0$ if $ab = 0$. The next observation follows from Example \ref{exam:MatrixNormalizer} and Lemma \ref{lem:SumOfNormalizers}.

\begin{lemma} \label{lem:OrderZeroMatrixUnit}
	Let $(D\subseteq A)$ be a nondegenerate sub-$C^*$-algebra,  $\phi:M_n\to A$ be a c.p.c.\ order zero map. Then $\phi( \cn_{M_n}(D_n)  )\subseteq \cn_A(D)$ if and only if $\phi(e_{ij})\in \cn_A(D)$ for every matrix unit $e_{ij}$.
\end{lemma}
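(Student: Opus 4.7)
The forward direction is immediate from Example~\ref{exam:MatrixNormalizer}: every matrix unit $e_{ij}$ already lies in $\cn_{M_n}(D_n)$, so the hypothesis $\phi(\cn_{M_n}(D_n))\subseteq \cn_A(D)$ forces $\phi(e_{ij})\in \cn_A(D)$. All the content is in the converse, and my plan is to decompose an arbitrary element of $\cn_{M_n}(D_n)$ as a sum of scalar multiples of matrix units with pairwise disjoint rows and columns, and then read off the conclusion from Lemma~\ref{lem:SumOfNormalizers}.

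Fix $a\in \cn_{M_n}(D_n)$. Example~\ref{exam:MatrixNormalizer} lets me write
\[
a \;=\; \sum_{(i,j)\in S} a_{ij}\, e_{ij},
\]
where $S\subseteq \{1,\dots,n\}^2$ is a finite set of index pairs whose first coordinates are pairwise distinct and whose second coordinates are pairwise distinct. Setting $x_{ij}:=a_{ij}\phi(e_{ij})$, each $x_{ij}$ is a normalizer of $D$ in $A$ by the standing hypothesis (together with the fact that $\cn_A(D)$ is closed under scalar multiplication), and $\phi(a)=\sum_{(i,j)\in S} x_{ij}$. To apply Lemma~\ref{lem:SumOfNormalizers} I need to verify, for distinct $(i,j),(k,l)\in S$, the orthogonality conditions $x_{ij}^* x_{kl}=0$ and $x_{ij} x_{kl}^*=0$.

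The combinatorial point is that distinct elements of $S$ disagree in \emph{both} coordinates, so $i\neq k$ and $j\neq l$, whence $e_{ji}e_{kl}=\delta_{ik}e_{jl}=0$ and $e_{ij}e_{lk}=\delta_{jl}e_{ik}=0$. Since $\phi$ is $*$-linear and, being c.p.c.\ order zero, preserves arbitrary orthogonality (the fact recorded in the paragraph preceding the lemma, extracted from the Wolff--Winter structure theorem), both $\phi(e_{ji})\phi(e_{kl})=\phi(e_{ij})^*\phi(e_{kl})$ and $\phi(e_{ij})\phi(e_{lk})=\phi(e_{ij})\phi(e_{kl})^*$ vanish. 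Lemma~\ref{lem:SumOfNormalizers} then delivers $\phi(a)\in\cn_A(D)$, completing the converse.

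The only mild obstacle I anticipate is invoking order-zero orthogonality on the matrix units $e_{ij}$, which are not positive; this is precisely the extension of the defining order-zero identity from orthogonal positive pairs to arbitrary orthogonal pairs, and the paper has explicitly recorded it just before the lemma statement, so I can cite it directly.
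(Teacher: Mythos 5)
Your argument is correct and is exactly the paper's proof: the paper dispatches this lemma with the single remark that it follows from Example \ref{exam:MatrixNormalizer} and Lemma \ref{lem:SumOfNormalizers}, and your decomposition of a normalizer of $D_n$ into scalar multiples of matrix units with pairwise distinct rows and pairwise distinct columns, followed by the order-zero preservation of one-sided orthogonality and an application of Lemma \ref{lem:SumOfNormalizers}, is precisely the intended filling-in of that remark. Two small points worth fixing: the structure theorem is due to Winter--Zacharias (not ``Wolff--Winter''), and Lemma \ref{lem:SumOfNormalizers} carries an abelianness hypothesis on $D$ that the statement of the present lemma omits --- a wrinkle you inherit from the paper itself rather than introduce, and harmless in all of the paper's applications, where $D$ is always commutative.
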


\section{$C^*$-algebraic characterization of dynamical subequivalence}
\label{sec:Preorder}

Given a group acting on a compact Hausdorff space, one obtains a preorder on the open sets of the space which encodes certain information about the dynamics.

\begin{defn}[{\cite[Definition 3.1]{Kerr:JEMS}}]\label{defn:DynamicalPreorder} 
Let $G$ be a countable discrete group, let $X$ be a compact Hausdorff space, and let $\alpha:G\curvearrowright X$ be an action by homeomorphisms. For a closed set $F$ and an open set $V$ in $X$, we write $F\prec V$ if there are open sets $U_1,...,U_n$ and $s_1,...,s_n\in G$ such that $F\subseteq \bigcup_{i=1}^n U_i$ and that $s_iU_i$ are pairwise disjoint subsets of $V$.

For open sets $O, V$ in $X$, we write $O\prec V$ if $F\prec V$ for every closed subset $F$ of $O$.
\end{defn}

This preorder was first defined in talks of Wilhelm Winter, and features prominently in \cite{Kerr:JEMS,KerrSzabo}.

In \cite{Ma:ETDS}, this definition was extended to tuples of open sets, which we recall below. The main motivation was to study a generalized version of the classical type semigroup, and use it to characterize dynamical comparison.

\newcommand{\suppo}{\mathrm{supp}^\circ}
For a continuous function $f \in C_0(X)$ on a locally compact Hausdorff space $X$, we denote
\begin{equation}\label{eq:suppo} \suppo(f):=f^{-1}(\mathbb C \setminus \{0\})   \end{equation} 
(the \emph{open support} of $f$).

\newcommand{\diag}{\mathrm{diag}}
\begin{defn}[{\cite[Definitions 1.4 and 2.1]{Ma:ETDS}}]
\label{defn:DynamicalCuntz}
Let $G$ be a countable discrete group, let $X$ be a compact Hausdorff space, and let $\alpha:G\curvearrowright X$ be an action by homeomorphisms.
For a tuple of compact sets $F_1,\dots,F_n \subseteq X$ and a tuple of open sets $V_1,\dots,V_m \subseteq X$, write $(F_1,\dots,F_n)\prec (V_1,\dots,V_m)$ if
there are open sets $U_{i,j}\subseteq X$, group elements $s_{i,j} \in G$, and indices $k_{i,j} \in \{1,\dots,m\}$ for $i=1,\dots,n$ and $j=1,\dots,J_i$ such that:
\begin{enumerate}
\item For each $i$,
\begin{equation} F_i \subseteq U_{1i,1}\cup \cdots \cup U_{i,J_i}, \end{equation}
\item
\begin{equation} \coprod_{i=1}^n \coprod_{j=1}^{J_i} s_{i,j}U_{i,j} \times \{k_{i,j}\} \subseteq \coprod_{l=1}^m V_l \times \{l\}. \end{equation}
\end{enumerate}

For $a=\diag(a_1,\dots,a_n) \in (D_n \otimes C(X))_+$ and $b=\diag(b_1,\dots,b_m)\in (D_m\otimes C(X))_+$, we write $a \preccurlyeq b$ if $(F_1,\dots,F_n)\prec (\suppo(b_1),\dots,\suppo(b_m))$ for every tuple of compact sets $F_1,\dots,F_n$ with $F_i \subseteq \suppo(a_i)$ for all $i$.
\end{defn}

It was noted in \cite{Ma:ETDS} that the preorder $\preccurlyeq$ is closely related to the Cuntz subequivalence. Indeed, \cite[Proposition 2.3]{Ma:ETDS} shows that for $a = \diag(a_1,...,a_n)\in (D_n \otimes C(X))_+$ and $b=\diag(b_1,\dots,b_m)\in (D_m\otimes C(X))_+$, if $a\preccurlyeq b$ then $a$ is Cuntz subequivalent to $b$ in $C(X)\rtimes_\alpha G$. The main result of this section shows that this preorder is completely characterized by a refined Cuntz subequivalence that makes use of ``one-sided normalizers".

\begin{defn}
Let $(D\subseteq A)$ be a sub-$C^*$-algebra.
An element $a \in A$ is an \emph{$r$-normalizer} of $D$ if $a^*Da \subseteq D$.
It is an \emph{$s$-normalizer} of $D$ if $aDa^* \subseteq D$.

The set of $r$-normalizers of $D$ in $A$ is denoted $\mathcal{RN}_A(D)$, and the set of $s$-normalizers of $D$ in $A$ is denoted $\mathcal{SN}_A(D)$.
\end{defn}

The names ``$r$-normalizer'' and ``$s$-normalizer'' are motivated by a connection to $r$- and $s$-sections for groupoids, established in Proposition \ref{prop:SeminormalizerSection} below.
The following facts are evident:
\begin{itemize}
\item The product of two $r$-normalizers is an $r$-normalizer, and likewise for $s$-normalizers;
\item $\mathcal{RN}_A(D)=\mathcal{SN}_A(D)^*$;
\item an element is a normalizer if and only if it is both an $r$- and an $s$-normalizer.
\end{itemize}

Here is a useful descriptions of $r$-normalizers in matrix amplifications.

\begin{lemma}
\label{lem:MatrixSeminormalizer}
Let $(D
\subseteq A)$ be a sub-$C^*$-algebra and let
\begin{equation} x = \left(\begin{array}{ccc} x_{11} & \cdots & x_{1n} \\ \vdots && \vdots \\ x_{n1} & \cdots & x_{nn} \end{array}\right) \in M_n\otimes A. \end{equation}
Then $x \in \mathcal{RN}_{M_n\otimes A}(D_n\otimes D)$ if and only if
\begin{enumerate}
\item $x_{ij} \in \mathcal{RN}_{A}(D)$ for all $i,j$, and
\item for all $i,j,k$ with $i\neq j$ and all $a \in D$, $x_{ki}^*ax_{kj}=0$.
\end{enumerate}
\end{lemma}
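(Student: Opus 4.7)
The plan is to derive both directions simultaneously from a single direct computation, namely by expanding the conjugation $x^*(e_{kk}\otimes d)x$ for $d\in D$ and reading off which conditions on the entries $x_{ij}$ force the result to land inside $D_n\otimes D$.

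First I would observe that $D_n\otimes D$ is the norm-closed linear span of the elementary tensors $e_{kk}\otimes d$ for $k\in\{1,\dots,n\}$ and $d\in D$. Since the map $y\mapsto x^*yx$ is linear and norm-continuous, the condition $x^*(D_n\otimes D)x\subseteq D_n\otimes D$ is equivalent to $x^*(e_{kk}\otimes d)x\in D_n\otimes D$ for every such $k$ and $d$. Writing $x=\sum_{i,j}e_{ij}\otimes x_{ij}$ and using the matrix unit identity $e_{ji}e_{kk}e_{pq}=\delta_{ik}\delta_{kp}e_{jq}$, a straightforward multiplication gives
\[ x^*(e_{kk}\otimes d)x \;=\; \sum_{j,q=1}^n e_{jq}\otimes x_{kj}^* d\, x_{kq}. \]

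Next I would split this sum into its diagonal and off-diagonal parts. An element of $M_n\otimes A$ lies in $D_n\otimes D$ precisely when all off-diagonal coefficients vanish and all diagonal coefficients belong to $D$. The vanishing of the off-diagonal coefficients, required for every $k$ and every $d\in D$, translates (after relabelling $(j,q)$ as $(i,j)$) to condition (ii): $x_{ki}^*a x_{kj}=0$ whenever $i\neq j$ and $a\in D$. The diagonal requirement says $x_{kj}^* d\, x_{kj}\in D$ for all $k,j$ and all $d\in D$, which is exactly the $r$-normalizer condition on each entry, i.e.\ condition (i). Thus conditions (i) and (ii) are both necessary and sufficient.

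The main (minor) obstacle is purely notational: one must keep the row and column indices straight in the expansion of $x^*(e_{kk}\otimes d)x$ and verify that the indexing falling out of the computation matches the statement of condition (ii). Beyond that, the argument is essentially a one-line calculation, and no deeper structural input about normalizers or sub-$C^*$-algebras is needed.
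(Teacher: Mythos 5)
Your proposal is correct and follows essentially the same route as the paper: both rest on the single computation that the $(i,j)$-entry of $x^*bx$ for diagonal $b$ is $\sum_k x_{ki}^* b_k x_{kj}$, with your reduction to $b = e_{kk}\otimes d$ via linearity being just a streamlined version of the paper's step of setting $b_k := a$ and $b_l := 0$ for $l \neq k$. The indexing in your expansion does match condition (ii) after the relabelling you describe, so there is no gap.
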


\begin{proof}
For $b=\diag(b_1,\dots,b_n) \in D_n\otimes D$, we compute that the $(i,j)$-entry of $x^*bx$ is
\begin{equation} \sum_{k=1}^n x_{ki}^*b_kv_{kj}. \end{equation}

Suppose that $x \in \mathcal{RN}_{M_n\otimes A}(D_n\otimes D)$, so this must always be in $D$, and it must moreover be $0$ whenever $i\neq j$.
By setting $b_k:=a$ and $b_l:=0$ for $l\neq k$, we thus get that $x_{ki}^*ax_{kj} \in D$, and is moreover $0$ if $i\neq j$.
This shows both (i) and (ii).

Conversely, suppose that (i) and (ii) hold.
By (ii), it follows that the $(i,j)$-entry of $x^*bx$ is $0$ whenever $i\neq j$, i.e., $x^*bx$ is a diagonal matrix.
Moreover by (i), it follows that $x_{ki}^*b_kx_{ki}\in D$ for all $i,k$, and thus $x^*bx \in D_n\otimes D$.
This shows that $x \in \mathcal{RN}_{M_n\otimes A}(D_n\otimes D)$.
\end{proof}

These one-sided normalizers are best understood in the context of groupoids. For a locally compact Hausdorff \'etale groupoid $\mathcal{G}$ we write $\mathcal{G}^{(0)}$ for its unit space, $r$ and $s$ for the range and source map, respectively. If $x$ is a point in $\mathcal G^{(0)}$ then we write  $\mathcal G_x := \{ \gamma\in \mathcal G: s(\gamma ) = x  \}$ and $\mathcal G^x := \{ \gamma \in \mathcal G: r(\gamma ) = x  \}$.  We refer the readers to \cite{Sims:Notes} for more on \'etale groupoids and their $C^*$-algebras.

\begin{defn}[{\cite[Section 3]{Renault:IMSB}}]
Let $\mathcal G$ be a locally compact Hausdorff \'etale groupoid.
A subset $A$ of $\mathcal G$ is an \emph{$r$-section} if $r|_A:A \to \mathcal G^{(0)}$ is injective; it is an \emph{$s$-section} if $s|_A:A \to \mathcal G^{(0)}$ is injective.
\end{defn}

We note for context that the more familiar notion of a \emph{bisection} is a subset $A \subseteq \mathcal G$ which is both an $r$-section and an $s$-section.

It is known that when the groupoid is topologically principal, normalizers are exactly functions that are supported in bisections (\cite[Proposition 4.8]{Renault:IMSB}; in the case of principal \'etale groupoids, see \cite[Proposition 1.6]{Kumjian:CJM}), a fact classically proven using the polar decomposition of a normalizer.
We generalize this fact, at least in the principal case, to $r$-normalizers and $r$-sections; however, we give a completely different argument, since the partial isometry in the polar decomposition of an $r$-normalizer no longer leads to an $r$-section.

In the following, we make sense of the open support of an element $a \in C^*_r(\mathcal G)$, as a subset of $\mathcal G$, via the canonical (non-algebraic, non-isometric) embedding $C^*_r(\mathcal G) \to C_0(\mathcal G)$ (see, for example, \cite[Proposition 3.3.3]{Sims:Notes}).

\begin{prop}
\label{prop:SeminormalizerSection}
Let $\mathcal G$ be a locally compact Hausdorff principal \'etale groupoid, and let $a \in C^*_r(\mathcal G)$.
Then $a \in \mathcal{RN}_{C^*_r(\mathcal G)}(C_0(\mathcal G^{(0)}))$ if and only if $\suppo(a)$ is an $r$-section.
Likewise, $a \in \mathcal{SN}_{C^*_r(\mathcal G)}(C_0(\mathcal G^{(0)}))$ if and only if $\suppo(a)$ is an $s$-section.
\end{prop}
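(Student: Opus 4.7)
The plan is to prove the $r$-normalizer equivalence; the $s$-normalizer case then follows by applying it to $a^*$, using $\mathcal{SN}_{C^*_r(\mathcal G)}(C_0(\mathcal G^{(0)})) = \mathcal{RN}_{C^*_r(\mathcal G)}(C_0(\mathcal G^{(0)}))^*$, $\suppo(a^*) = \suppo(a)^{-1}$, and the fact that inversion interchanges $r$-sections and $s$-sections. Throughout, I view elements of $C^*_r(\mathcal G)$ as continuous functions on $\mathcal G$ via the canonical embedding. Since $\mathcal G$ is Hausdorff \'etale, $\mathcal G^{(0)}$ is clopen in $\mathcal G$, so $a^*da \in C_0(\mathcal G^{(0)})$ if and only if $(a^*da)(\eta) = 0$ for every non-unit $\eta$.

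The workhorse is a formula obtained via the left regular representation $\pi_x : C^*_r(\mathcal G) \to B(\ell^2(\mathcal G_x))$ at $x := s(\eta)$. Using $\pi_x(a)\delta_x = \sum_{\gamma \in \mathcal G_x} a(\gamma)\delta_\gamma$ (an $\ell^2$-vector of norm at most $\|a\|$), the diagonal action $\pi_x(d)\delta_\gamma = d(r(\gamma))\delta_\gamma$ for $d \in C_0(\mathcal G^{(0)})$, and the analogous expression $\pi_x(a)\delta_\eta = \sum_{\gamma \in \mathcal G_x} a(\gamma\eta^{-1})\delta_\gamma$, one computes
\[
(a^*da)(\eta) \;=\; \langle \pi_x(a^*da)\delta_x,\, \delta_\eta\rangle \;=\; \sum_{\gamma \in \mathcal G_{s(\eta)}} d(r(\gamma))\, a(\gamma)\, \overline{a(\gamma\eta^{-1})},
\]
with absolute convergence by Cauchy--Schwarz. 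For the ``if'' direction, assume $\suppo(a)$ is an $r$-section and fix a non-unit $\eta$. For each $\gamma \in \suppo(a)$ with $s(\gamma) = s(\eta)$, the element $\gamma\eta^{-1}$ satisfies $r(\gamma\eta^{-1}) = r(\gamma)$ yet $\gamma\eta^{-1} \neq \gamma$ (because $\eta$ is not a unit), so the $r$-section property forces $\gamma\eta^{-1} \notin \suppo(a)$; every term vanishes and $(a^*da)(\eta) = 0$.

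For the ``only if'' direction, suppose $\suppo(a)$ is not an $r$-section and pick distinct $\gamma_1, \gamma_2 \in \suppo(a)$ with $r(\gamma_1) = r(\gamma_2)$. Set $\eta := \gamma_1^{-1}\gamma_2$ (a non-unit) and $x := s(\gamma_2) = s(\eta)$; the term $\gamma = \gamma_2$ in the sum equals $d(r(\gamma_2))\, a(\gamma_2)\, \overline{a(\gamma_1)}$, which I want to promote to the dominant contribution. Because $\mathcal G$ is \'etale, the fiber $\mathcal G_x$ is discrete, so $F_\epsilon := \{\gamma \in \mathcal G_x : |a(\gamma)| \geq \epsilon\} \setminus \{\gamma_2\}$ is finite for every $\epsilon > 0$; principality forces $r(\gamma_2) \notin r(F_\epsilon)$. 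Choosing a bump $d \in C_0(\mathcal G^{(0)})_+$ with $\|d\|_\infty = 1$, $d(r(\gamma_2)) = 1$, and $d \equiv 0$ on $r(F_\epsilon)$ kills all ``middle'' terms, while the ``tail'' is controlled by Cauchy--Schwarz:
\[
\bigl|\sum_{|a(\gamma)|<\epsilon} d(r(\gamma))\, a(\gamma)\, \overline{a(\gamma\eta^{-1})}\bigr| \;\leq\; \bigl(\sum_{|a(\gamma)|<\epsilon} |a(\gamma)|^2\bigr)^{1/2} \|a\|,
\]
which drops below $\tfrac{1}{2}|a(\gamma_1)a(\gamma_2)|$ once $\epsilon$ is sufficiently small. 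Hence $(a^*da)(\eta) \neq 0$, contradicting $a \in \mathcal{RN}$. The main obstacle is precisely this analytic step: without a polar decomposition for one-sided normalizers (as the authors forewarn), one must combine three ingredients --- principality, the discreteness of the fibers $\mathcal G_x$ coming from \'etaleness, and an $\ell^2$-tail estimate for $a$ in the regular representation --- to isolate the single ``bad'' term from a potentially infinite family of competing contributions.
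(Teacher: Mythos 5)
Your proof is correct and takes essentially the same route as the paper's: the same fiberwise formula for $(a^*da)(\eta)$ (the paper quotes the convolution formula together with Renault's square-summability result where you derive both from the regular representation), the same vanishing argument for the ``if'' direction, and for the converse the same strategy of isolating the $\gamma_1,\gamma_2$ term with a bump function supported away from the finitely many large terms (principality used identically) while controlling the remainder by a Cauchy--Schwarz $\ell^2$-tail estimate. One small repair: the finiteness of $F_\epsilon$ does not follow from discreteness of the fiber $\mathcal G_x$ (a discrete set can be infinite) but from the square-summability of $(a(\gamma))_{\gamma \in \mathcal G_x}$, which you have already established via $\|\pi_x(a)\delta_x\| \leq \|a\|$.
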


\begin{proof}
Using the adjoint, the second statement is equivalent to the first, which is the one we'll prove.
Set $A:=\suppo(a)$.

For $f \in C_0(\mathcal G^{(0)})$ and $\gamma \in \mathcal G$, we compute
\begin{align}
\notag
(a^*fa)(\gamma)
&= \sum_{s(\alpha)=r(\gamma)} a^*(\alpha^{-1})f(r(\alpha))a(\alpha\gamma) \\
&= \sum_{s(\alpha)=r(\gamma)} \overline{a(\alpha)}f(r(\alpha))a(\alpha\gamma),
\label{eq:SeminormalizerSection1}
\end{align}
and note that the summand can only be nonzero when both $\alpha$ and $\alpha\gamma$ are in $A$.

Thus, if $A$ is an $r$-section, then nonzero summands can only arise if $\gamma$ is a unit, so that $a^*fa \in C_0(\mathcal G^{(0)})$. 

For the other direction, suppose for a contradiction that there exist distinct elements $\gamma_1,\gamma_2 \in A$ such that $r(\gamma_1)=r(\gamma_2)$; then we set
\begin{equation} \gamma:=\gamma_1^{-1}\gamma_2 \in \mathcal G \setminus \mathcal G^{(0)}. \end{equation}
By \cite[Proposition II.4.1 (i)]{Renault:Book}, the sums $\sum_{s(\alpha)=r(\gamma)} |a(\alpha)|^2$ and $\sum_{s(\alpha)=r(\gamma)} |a(\alpha\gamma)|^2$ converge; thus by the Cauchy--Schwarz inequality, so does
\begin{equation} \sum_{s(\alpha)=r(\gamma)} |a(\alpha)a(\alpha\gamma)|. \end{equation}
Therefore we may find a finite set $F$ of $\mathcal G_{r(\gamma)}$ such that
\begin{equation}
\label{eq:SeminormalizerSection2}
 \sum_{\alpha \in \mathcal G_{r(\gamma)}\setminus F} |a(\alpha)a(\alpha\gamma)| < |a(\gamma_1)a(\gamma_2)|.
\end{equation}
Choose a function $f \in C_0(\mathcal G^{(0)})$ of norm $1$ such that $f(r(\gamma_1))=1$ and $f(r(\alpha))=0$ for $\alpha \in F \setminus \{\gamma_1\}$.
(Since $\mathcal G$ is principal, $r(\gamma_1) \neq r(\alpha)$ for any $\alpha \in F \setminus \{\gamma_1\}$, so this is possible.)
Then
\begin{eqnarray}
\notag
|(a^*fa)(\gamma)| 
&\stackrel{\eqref{eq:SeminormalizerSection1}}=& \big| \sum_{s(\alpha)=r(\gamma)} \overline{a(\alpha)}f(r(\alpha))a(\alpha\gamma) \big| \\
\notag
&\stackrel{\eqref{eq:SeminormalizerSection2}}>& \big|\sum_{\alpha \in F} \overline{a(\alpha)}f(r(\alpha))a(\alpha\gamma) \big| - |a(\gamma_1)a(\gamma_2)| \\
\notag
&=& |\overline{a(\gamma_1)}f(r(\gamma_1))a(\gamma_1\gamma)| -|a(\gamma_1)a(\gamma_2)| \\
&=& 0.
\end{eqnarray}
Since $\gamma \not\in \mathcal G^{(0)}$, this implies that $a^*fa \not\in C_0(\mathcal G^{(0)})$, which contradicts the hypothesis that $a \in \mathcal{RN}_{C^*_r(\mathcal G)}(C_0(\mathcal G^{(0)}))$.
\end{proof}

Specializing the above to the case of interest in this paper -- that $\mathcal G$ is a transformation groupoid $G \times X$ -- yields the next corollary.
In the following, for an element $a$ of the crossed product $C(X)\rtimes_\alpha G$, we write
\begin{equation} a = \sum_{g\in G} a_g u_g \end{equation}
to mean that $E(au_g^*)=a_g$ (an element of $C(X)$) for all $g \in G$, where $E:C(X)\rtimes_\alpha G \to C(X)$ is the canonical conditional expectation.
We do \emph{not} mean that the sum converges in any sense.

\begin{cor}
\label{cor:SeminormalizerCoeffs}
Let $G$ be a countable discrete group, let $X$ be a compact Hausdorff space, let $\alpha:G\curvearrowright X$ be a free action, and let
\begin{equation} a = \sum_{g\in G} a_g u_g \in C(X)\rtimes_\alpha G. \end{equation}
Then $a \in \mathcal{RN}_{C(X)\rtimes_\alpha G}(C(X))$ if and only if the collection 
\begin{equation} \{\suppo(a_g):g\in G\} \end{equation}
 is pairwise disjoint.
\end{cor}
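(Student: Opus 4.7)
The plan is to reduce this to Proposition \ref{prop:SeminormalizerSection} via the standard identification of $C(X)\rtimes_\alpha G$ with the reduced groupoid $C^*$-algebra of the transformation groupoid $\mathcal G := X\rtimes G$. Recall $\mathcal G = X\times G$ as a set, with unit space $\mathcal G^{(0)} = X$, range $r(x,g)=x$, and source $s(x,g)=g^{-1}x$; it is an \'etale groupoid, and the freeness of $\alpha$ is exactly the condition that $\mathcal G$ be principal. Under the canonical isomorphism $C^*_r(\mathcal G)\cong C(X)\rtimes_\alpha G$ (the two crossed products coincide in the amenable setting of primary interest; otherwise we take the reduced crossed product), $C_0(\mathcal G^{(0)})$ corresponds to $C(X)$, and the canonical conditional expectation $E\colon C(X)\rtimes_\alpha G \to C(X)$ is intertwined with the restriction $C^*_r(\mathcal G)\hookrightarrow C_0(\mathcal G) \to C_0(\mathcal G^{(0)})$.

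Using this intertwining, the first step is to identify the open support of $a$ as a function on $\mathcal G$. For $g\in G$, the function $au_g^* \in C(X)\rtimes_\alpha G$ corresponds to a function on $\mathcal G$ whose value at a unit $x\in\mathcal G^{(0)}$ is $a_g(x)$; translating back by $u_g$, one sees that $a$, viewed via the canonical embedding in $C_0(\mathcal G)$, is supported precisely on
\begin{equation}
\suppo(a) \;=\; \bigsqcup_{g\in G} \suppo(a_g)\times\{g\} \;\subseteq\; X\times G = \mathcal G,
\end{equation}
up to the convention-dependent composition with $\alpha_g$ on each slice (which is a homeomorphism of $X$ and therefore irrelevant for the disjointness question). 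Proposition \ref{prop:SeminormalizerSection} then asserts that $a\in\mathcal{RN}_{C(X)\rtimes_\alpha G}(C(X))$ if and only if this subset is an $r$-section of $\mathcal G$.

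Finally, since $r(x,g)=x$ depends only on the $X$-coordinate, $r$ is injective on $\bigsqcup_g \suppo(a_g)\times\{g\}$ if and only if no point $x\in X$ lies in both $\suppo(a_g)$ and $\suppo(a_h)$ for distinct $g,h\in G$; that is, if and only if $\{\suppo(a_g):g\in G\}$ is pairwise disjoint. This is exactly the claim. There is no real mathematical obstacle here: the only care point is the bookkeeping around the groupoid/crossed-product dictionary, and in particular verifying that the ``formal'' coefficients $a_g$ really do record the values of $a$ on the $g$-slice of $\mathcal G$ (which is immediate from the fact that the canonical embedding $C^*_r(\mathcal G)\to C_0(\mathcal G)$ is compatible with $E$).
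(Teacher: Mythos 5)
Your proposal is correct and follows the paper's proof essentially verbatim: both identify $C(X)\rtimes_\alpha G$ with $C^*_r$ of the transformation groupoid (principal by freeness), compute $\suppo(a)$ slice-by-slice, and apply Proposition \ref{prop:SeminormalizerSection} together with the observation that the range map sends the $g$-slice of the support onto $\suppo(a_g)$, so injectivity of $r$ is exactly pairwise disjointness of these sets. The only cosmetic difference is the choice of convention ($X\times G$ versus the paper's $G\times X$ with the twisted slices $\{g\}\times g^{-1}.\suppo(a_g)$), which does not affect the argument.
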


\begin{proof}
The crossed product $C(X)\rtimes_\alpha G$ is the groupoid $C^*$-algebra of the transformation groupoid $\mathcal G=G \times X$ (see \cite[Example 2.1.15]{Sims:Notes} for example), and upon making this identification, one can easily compute
\[ \suppo(a) = \bigcup_{g\in G} \{g\} \times g^{-1}.\suppo(a_g). \]
Moreover, since $r(g,g^{-1}x) = x$ for $(g,x) \in \mathcal G$, this is an $r$-section if and only if $\{\suppo(a_g):g\in G\}$ is pairwise disjoint.
\end{proof}

We also use the above to give an interpretation of the conditions in Lemma \ref{lem:MatrixSeminormalizer} in the case of a free group action.

\begin{cor}
\label{cor:MatrixSeminormalizerCoeffs}
Let $G$ be a countable discrete group, let $X$ be a compact Hausdorff space, let $\alpha:G\curvearrowright X$ be a free action, and let
\begin{equation} x = \left(\begin{array}{ccc} x_{11} & \cdots & x_{1n} \\ \vdots && \vdots \\ x_{n1} & \cdots & x_{nn} \end{array}\right) \in M_n\otimes (C(X)\rtimes_\alpha G), \end{equation}
where for each $i,j=1,\dots,n$,
\begin{equation} x_{ij}=\sum_{g\in G} x_{i,j,g} u_g. \end{equation}
Then $x \in \mathcal{RN}_{M_n\otimes (C(X)\rtimes_\alpha G)}(D_n\otimes C(X))$ if and only if, for every $i=1,\dots,n$, the collection
\begin{equation} \{\suppo(x_{i,j,g}): j=1,\dots,n,\ g\in G\} \end{equation}
is pairwise disjoint.
\end{cor}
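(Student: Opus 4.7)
The plan is to reduce the corollary to Proposition \ref{prop:SeminormalizerSection} via a groupoid identification of the matrix amplification. Let $\mathcal G := G\times X$ denote the transformation groupoid. Under the canonical isomorphism
\[ M_n \otimes (C(X)\rtimes_\alpha G) \;\cong\; C^*_r(\mathcal H), \]
where $\mathcal H := \{1,\dots,n\}^2 \times \mathcal G$ is the product (as groupoids) of the pair groupoid on $\{1,\dots,n\}$ with $\mathcal G$, the subalgebra $D_n\otimes C(X)$ corresponds to $C_0(\mathcal H^{(0)})$, with $\mathcal H^{(0)} = \{1,\dots,n\}\times X$. Freeness of $\alpha$ makes $\mathcal G$ principal, and the pair groupoid is trivially principal, so $\mathcal H$ is principal as well. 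Proposition \ref{prop:SeminormalizerSection} then reduces the statement to showing that $\suppo(x)\subseteq \mathcal H$ is an $r$-section if and only if, for each fixed $i$, the collection $\{\suppo(x_{i,j,g}) : j=1,\dots,n,\, g\in G\}$ is pairwise disjoint.

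For this last step I compute the support of $x = \sum_{i,j} e_{ij}\otimes x_{ij}$ directly, using the formula $\suppo(a) = \bigcup_g \{g\}\times g^{-1}\suppo(a_g)$ from the proof of Corollary \ref{cor:SeminormalizerCoeffs}. This yields
\[ \suppo(x) = \big\{\, ((i,j),(g,y)) : x_{i,j,g}(g\cdot y) \neq 0 \,\big\}, \]
and the range map on $\mathcal H$ sends $((i,j),(g,y))$ to $(i,\,g\cdot y)$. Thus $\suppo(x)$ fails to be an $r$-section precisely when there exist distinct elements $((i,j),(g,y))$ and $((i,j'),(g',y'))$ in $\suppo(x)$ with $g\cdot y = g'\cdot y'$; setting $z := g\cdot y$, this happens iff there exist $i$ and $(j,g)\neq (j',g')$ with $z\in \suppo(x_{i,j,g})\cap \suppo(x_{i,j',g'})$ (the coordinates $y = g^{-1}z$ and $y' = (g')^{-1}z$ are determined by $z$). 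The negation of this is exactly the stated pairwise disjointness.

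The main technical point, which I expect to be the chief obstacle, is the groupoid identification in the first paragraph: while $M_n\otimes C^*_r(\mathcal G) \cong C^*_r(\{1,\dots,n\}^2\times \mathcal G)$ is standard (coming from $M_n \cong C^*_r(\{1,\dots,n\}^2)$ for the pair groupoid together with compatibility of $C^*_r$ with direct products of groupoids, using nuclearity of $M_n$), one should verify that $D_n\otimes C(X)$ corresponds to $C_0(\mathcal H^{(0)})$ and that supports can be computed factor-wise as above. Should one prefer to bypass the groupoid machinery, an equivalent route is to start from Lemma \ref{lem:MatrixSeminormalizer}: part (i) together with Corollary \ref{cor:SeminormalizerCoeffs} gives the disjointness within each $(i,j)$, while part (ii) --- $x_{ki}^*ax_{kj}=0$ for all $a\in C(X)$ and $i\neq j$ --- becomes, via a direct Fourier-coefficient calculation and localization of $a$, the cross-column disjointness.
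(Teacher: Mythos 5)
Your argument is correct, and at bottom it is the same reduction the paper makes, though implemented through a different formal device, so a comparison is worthwhile. The paper avoids the product-groupoid machinery that you yourself flag as the chief obstacle: it observes that $(D_n \subseteq M_n)$ is itself a crossed-product pair, namely $(C(\{1,\dots,n\}) \subseteq C(\{1,\dots,n\})\rtimes_\pi \mathbb Z/n)$ for the translation action $\pi$, so that $(D_n\otimes C(X) \subseteq M_n\otimes (C(X)\rtimes_\alpha G))$ identifies with $(C(\{1,\dots,n\}\times X) \subseteq C(\{1,\dots,n\}\times X)\rtimes_{\pi\times\alpha}(\mathbb Z/n\times G))$, carrying $x$ to $y=\sum_{g\in G}\sum_{i,j}(\chi_{\{i\}}\otimes x_{i,j,g})u_{(i-j,g)}$; since $\pi\times\alpha$ is again free, Corollary \ref{cor:SeminormalizerCoeffs} applies verbatim, and $\suppo(\chi_{\{i\}}\otimes x_{i,j,g})=\{i\}\times \suppo(x_{i,j,g})$ immediately gives the row-wise disjointness. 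Your groupoid $\mathcal H=\{1,\dots,n\}^2\times\mathcal G$ is in fact the same object --- the transformation groupoid of the free transitive action $\mathbb Z/n \curvearrowright \{1,\dots,n\}$ is isomorphic to the pair groupoid on $n$ points --- but your route passes through Proposition \ref{prop:SeminormalizerSection} directly, which costs you the standard-but-unproved-here facts that $C^*_r$ respects products of groupoids and that the diagonal matches $C_0(\mathcal H^{(0)})$, together with a by-hand $r$-section computation; that computation is carried out correctly, including the key point that a common range value $z$ determines $y=g^{-1}z$ and $y'=(g')^{-1}z$, so failure of the $r$-section condition is exactly a row-wise overlap with $(j,g)\neq(j',g')$. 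What the paper's phrasing buys is the right to cite its own crossed-product corollary with no new ingredients; what yours buys is conceptual transparency, since $(D_n\subseteq M_n)$ as a pair-groupoid algebra is arguably the more natural model. One caution about your sketched fallback via Lemma \ref{lem:MatrixSeminormalizer}: in the localization step for condition (ii), infinitely many group elements can contribute to a single Fourier coefficient of $x_{ki}^*ax_{kj}$, so ruling out cancellation requires the Cauchy--Schwarz/finite-truncation argument from the proof of Proposition \ref{prop:SeminormalizerSection}, not merely a bump function; as stated, that sketch has a gap that your main route does not.
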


\begin{proof}
Consider the action $(\pi\times\alpha):\mathbb Z/n \times G \curvearrowright \{1,\dots,n\} \times X$ where $\pi$ is the canonical action of the cyclic group $\mathbb Z/n$ on $\{1,\dots,n\}$; this product action is free since both $\pi$ and $\alpha$ are.
The sub-$C^*$-algebra $( D_n\otimes C(X) \subseteq M_n\otimes (C(X)\rtimes_\alpha G) )$ identifies canonically with $(   C(\{1,\dots,n\}\times X) \subseteq C(\{1,\dots,n\}\times X) \rtimes_{\pi\times\alpha} (\mathbb Z/n \times G))$, and this identification maps $x$ to
\begin{equation} y:=\sum_{g\in G}\sum_{i,j=1}^n (\chi_{\{i\}} \otimes x_{i,j,g})u_{(i-j,g)} \in C(\{1,\dots,n\}\times X) \rtimes_{\pi\times\alpha} (\mathbb Z/n \times G). \end{equation}

Thus, $x \in \mathcal{RN}_{M_n\otimes (C(X)\rtimes_\alpha G)}(D_n\otimes C(X))$ if and only if 
\begin{equation} y \in\mathcal{RN}_{C(\{1,\dots,n\}\times X) \rtimes_{\pi\times\alpha} (\mathbb Z/n \times G)}(C(\{1,\dots,n\}\times X)). \end{equation}
By Corollary \ref{cor:SeminormalizerCoeffs}, this is equivalent to the collection
\begin{equation} \{\suppo(\chi_{\{i\}}\otimes x_{i,j,g}): i,j=1,\dots,n,\,g\in G\} \end{equation}
of subsets of $\{1,\dots,n\}\times G$ being pairwise disjoint.
Since $\suppo(\chi_{\{i\}}\otimes x_{i,j,g}) = \{i\}\times \suppo(x_{i,j,g})$, this is the same as requiring that
\begin{equation} \{\suppo(x_{i,j,g}): j=1,\dots,n,\ g\in G\} \end{equation}
be pairwise disjoint, for each $i$.
\end{proof}

Here is our algebraic characterization of the preorder $\preccurlyeq$ from Definition \ref{defn:DynamicalCuntz}. Note that although $\preccurlyeq$ is defined for diagonal matrices in $C(X)$ of different sizes, we may always pad one of them with zeroes to arrange that they have the same size.

\begin{prop}
\label{prop:DynamicalCuntzCharacterization}
Let $G$ be a countable discrete group, let $X$ be a compact Hausdorff space, and let $\alpha:G\curvearrowright X$ be a free action.
Let $a,b \in (D_n\otimes C(X))_+$.
The following are equivalent:
\begin{enumerate}
\item $a \preccurlyeq b$;
\item there exists a sequence $(t_k)_{k=1}^\infty$ in $\mathcal{RN}_{M_n\otimes (C(X)\rtimes_\alpha G)}(D_n\otimes C(X))$ such that
\begin{equation} \lim_{k\to\infty} \|t_k^*bt_k - a\|=0; \end{equation}
\item for every $\e>0$ there exists $\dl>0$ and $t \in \mathcal{RN}_{M_n\otimes (C(X)\rtimes_\alpha G)}(D_n\otimes C(X))$ such that
\begin{equation} t^*(b-\dl)_+t = (a-\e)_+. \end{equation}
\end{enumerate}
\end{prop}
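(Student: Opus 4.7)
The plan is to prove the cyclic chain (i) $\Rightarrow$ (iii) $\Rightarrow$ (ii) $\Rightarrow$ (i). The key tool throughout is Corollary \ref{cor:MatrixSeminormalizerCoeffs}, which converts the $r$-normalizer condition into pairwise disjointness of the open supports of Fourier coefficients; freeness of $\alpha$ enters only through this corollary.

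For (i) $\Rightarrow$ (iii), given $\e > 0$, set $F_i := \{x : a_i(x) \geq \e\}$, a compact subset of $\suppo(a_i)$. Applying $a \preccurlyeq b$ yields open sets $U_{i,j}$, group elements $s_{i,j}$, and indices $k_{i,j}$ as in Definition \ref{defn:DynamicalCuntz}. Pick a partition of unity $\{h_{i,j}\}_j$ on $F_i$ subordinate to $\{U_{i,j}\}_j$. Since $s_{i,j}U_{i,j}\subseteq \suppo(b_{k_{i,j}})$, the function $b_{k_{i,j}} \circ s_{i,j}$ attains a positive minimum on the compact set $\supp(h_{i,j})$. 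Choose $\dl > 0$ smaller than all these minima, and select $\phi_{i,j} \in C(X)$ supported on $\supp(h_{i,j})$ such that $\phi_{i,j}^2 \cdot ((b_{k_{i,j}} - \dl)_+ \circ s_{i,j}) = h_{i,j}(a_i - \e)_+$ (possible by continuous functional calculus). Define $t \in M_n \otimes (C(X)\rtimes_\alpha G)$ by setting its $(l,i)$-entry to $t_{l,i} := \sum_{j : k_{i,j}=l} u_{s_{i,j}} \phi_{i,j}$. Expanding $t^*(b-\dl)_+ t$ using $u_s^* f u_s = f \circ s$ for $f \in C(X)$, the cross terms (for $(i,j) \neq (i',j')$ with $k_{i,j} = k_{i',j'} = l$) carry a factor $\phi_{i,j} \cdot (\phi_{i',j'} \circ s_{i',j'}^{-1} s_{i,j})$; its support lies in $s_{i,j}^{-1}(s_{i,j} U_{i,j} \cap s_{i',j'} U_{i',j'})$, which is empty by the disjointness in Definition \ref{defn:DynamicalCuntz}. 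The surviving diagonal self-pairings sum to $\sum_j h_{i,j}(a_i - \e)_+ = (a_i - \e)_+$. Finally, Corollary \ref{cor:MatrixSeminormalizerCoeffs} gives the $r$-normalizer property for $t$, since for each $l$ the open supports of the Fourier coefficients of the $l$-th row lie in the pairwise disjoint sets $s_{i,j} U_{i,j}$ with $k_{i,j} = l$.

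For (iii) $\Rightarrow$ (ii), apply (iii) with $\e_k := 1/k$ to obtain $\dl_k > 0$ and $t_k \in \mathcal{RN}_{M_n\otimes (C(X)\rtimes_\alpha G)}(D_n\otimes C(X))$ with $t_k^*(b-\dl_k)_+ t_k = (a - 1/k)_+$. The continuous function $g_\dl(x) := (x-\dl)_+^{1/2} x^{-1/2}$ (extended by $0$ at $0$) satisfies $g_\dl(b) b g_\dl(b) = (b - \dl)_+$, so $s_k := g_{\dl_k}(b) t_k$ satisfies $s_k^* b s_k = (a - 1/k)_+ \to a$ in norm; since $g_{\dl_k}(b) \in D_n \otimes C(X)$ is itself an $r$-normalizer, so is the product $s_k$. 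For (ii) $\Rightarrow$ (i), write $(t_k)_{l,i} = \sum_g t_{k,l,i,g} u_g$ and apply Corollary \ref{cor:MatrixSeminormalizerCoeffs} to get pairwise disjointness of $\{\suppo(t_{k,l,i,g}) : i, g\}$ for each $l$. Commutativity of $C(X)$ together with this disjointness forces $(t_k^* b t_k)_{ii'} = 0$ for $i \neq i'$ and reduces the diagonal entry to $(t_k^* b t_k)_{ii} = \sum_{l,g} \alpha_{g^{-1}}(|t_{k,l,i,g}|^2 b_l) \in C(X)$, converging uniformly to $a_i$. Given a compact $F_i \subseteq \suppo(a_i)$ with $a_i > \eta > 0$ on $F_i$, for $k$ large $F_i$ must lie inside $\bigcup_{l,g} g^{-1}(\suppo(t_{k,l,i,g}) \cap \suppo(b_l))$. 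A finite subcover (by compactness), together with the disjointness from the $r$-normalizer condition, yields the data $U_{i,j}, s_{i,j}, k_{i,j}$ required by Definition \ref{defn:DynamicalCuntz}.

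The main obstacle will be (i) $\Rightarrow$ (iii). The difficulty is that the $U_{i,j}$'s covering $F_i$ may overlap in $X$; the disjointness from Definition \ref{defn:DynamicalCuntz} holds only at the groupoid level, after transport via $s_{i,j}$ (i.e., in the product $X \times \{1, \ldots, n\}$). The construction of $t$ must leverage this groupoid-level disjointness to simultaneously ensure (a) cancellation of cross-terms in the matrix product $t^*(b-\dl)_+ t$ and (b) the $r$-normalizer property. Placing $u_{s_{i,j}}$ on the correct side of $\phi_{i,j}$ and tracking how supports transform under the various $\alpha_s$ are the essential technical points.
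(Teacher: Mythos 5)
Your proposal is correct, and on the substantive direction (i) $\Rightarrow$ (iii) it is essentially the paper's argument in different clothing: where you take a partition of unity $\sum_j h_{i,j}=1$ on $F_i$ with compact supports $\supp(h_{i,j})\subseteq U_{i,j}$ and fold the reciprocal of the transported $(b_{k_{i,j}}-\dl)_+$ into a single function $\phi_{i,j}$, the paper shrinks $U_{i,j}$ to $V_{i,j}$ with $\overline{V_{i,j}}\subseteq U_{i,j}$, takes $\sum_j h_{i,j}^2=(a_i-\e)_+$, and uses one functional-calculus element $\hat b_l$ per row with $\hat b_l^2(b_l-\dl)_+=1$ on $\suppo((b_l-2\dl)_+)$; both devices exploit the same positive lower bound of $b_{k_{i,j}}$ on a compact subset of $s_{i,j}U_{i,j}$, and your cross-term cancellation and appeal to Corollary \ref{cor:MatrixSeminormalizerCoeffs} match the paper's verbatim. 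The genuine difference is the organization of the remaining implications. The paper proves (ii) $\Rightarrow$ (iii) by observing that $t_k^*(b-\dl)_+t_k \in D_n\otimes C(X)$ and invoking R{\o}rdam's lemma \cite[Proposition 2.2]{Rordam:JFA92} inside that commutative algebra to produce $s \in D_n\otimes C(X)$ with $s^*t_k^*(b-\dl)_+t_ks=(a-\e)_+$, setting $t:=t_ks$; you instead close the cycle (i) $\Rightarrow$ (iii) $\Rightarrow$ (ii) $\Rightarrow$ (i), proving (ii) $\Rightarrow$ (i) directly by running an approximate version of the paper's (iii) $\Rightarrow$ (i) covering argument on $t_k^*bt_k$ itself: this avoids R{\o}rdam's lemma entirely, at the harmless cost of obtaining (ii) $\Rightarrow$ (iii) only around the cycle. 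Your correction factor $g_{\dl_k}(b)\in C^*(b)\subseteq D_n\otimes C(X)$ in (iii) $\Rightarrow$ (ii) is a worthwhile explicit step (the paper calls this implication immediate; since $\|t_k\|$ is not a priori controlled one cannot simply drop the $\dl$-cutoff, and left multiplication by an element of $D_n\otimes C(X)$ preserves the $r$-normalizer property, so your $s_k=g_{\dl_k}(b)t_k$ is exactly the needed one-liner). One point to shore up, though it is shared with the paper's own (iii) $\Rightarrow$ (i): the pointwise identity $(t_k^*bt_k)_{ii}=\sum_{l,g}(|t_{k,l,i,g}|^2b_l)\circ\alpha_g$ involves a possibly infinite Fourier expansion, and its convergence should be justified, e.g., via the embedding $C^*_r(\mathcal G)\to C_0(\mathcal G)$ and the summability from \cite[Proposition II.4.1]{Renault:Book} as used in Proposition \ref{prop:SeminormalizerSection}.
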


\begin{proof}
Let us write $a=\diag(a_1,\dots,a_n)$ and $b=\diag(b_1,\dots,b_n)$.

(i) $\Rightarrow$ (iii):
This is a variant on the proofs of \cite[Lemma 12.3]{Kerr:JEMS} and \cite[Proposition 2.3]{Ma:ETDS}.
In both of those proofs, it is shown (roughly) that $a \preccurlyeq b$ implies that $a$ is Cuntz subequivalent to $b$ in $C(X)\rtimes_\alpha G$ (note that, in \cite[Lemma 12.3]{Kerr:JEMS}, the hypothesis is formally stronger than just $a \preccurlyeq b$); the main novelty here is to verify that the Cuntz subequivalence can be witnessed using $r$-normalizers.

Let $\e>0$ be given.
Set $F_i:=\overline{\suppo((a_i-\e)_+)}$ for $i=1,\dots,n$, so that $F_i$ is a compact set contained in $\suppo(a_i)$.
Then apply Definition \ref{defn:DynamicalCuntz} to obtain open sets $U_{i,j}\subseteq X$, group elements $s_{i,j} \in G$, and indices $k_{i,j}\in\{1,\dots,m\}$ for $i=1,\dots,n$ and $j=1,\dots,J_i$ such that
\[
F_i \subseteq U_{i,1}\cup \cdots \cup U_{i,J_i}
\]
for $i=1, \dots, n$, and
\begin{equation}
\label{eq:DynamicalCuntzCharacterization1}
\coprod_{i=1}^n \coprod_{j=1}^{J_i} s_{i,j}U_{i,j} \times \{k_{i,j}\} \subseteq \coprod_{l=1}^m \suppo(b_l) \times \{l\}.
\end{equation}
Next find open sets $V_{i,j}$ such that $\overline{V_{i,j}}\subseteq U_{i,j}$ and
\begin{equation}
\label{eq:DynamicalCuntzCharacterization2}
 F_i \subseteq V_{i,1}\cup \cdots \cup V_{i,J_i}, \quad i=1,\dots,n. \end{equation}
It follows that 
\begin{equation} \coprod_{i=1}^n \coprod_{j=1}^{J_i} \overline{s_{i,j}V_{i,j}} \times \{k_{i,j}\} \subseteq \coprod_{l=1}^m \suppo(b_l) \times \{l\}, \end{equation}
so by compactness of the left-hand side, there exists $\dl>0$ such that
\begin{equation} \coprod_{i=1}^n \coprod_{j=1}^{J_i} \overline{s_{i,j}V_{i,j}} \times \{k_{i,j}\} \subseteq \coprod_{l=1}^m \suppo((b_l-2\dl)_+) \times \{l\}. \end{equation}
By \eqref{eq:DynamicalCuntzCharacterization2} and the definition of $F_i$, we may choose a continuous function $h_{i,j} \in C_0(V_{i,j})_+$ for each $i=1,\dots,n$ and $j=1,\dots,J_i$ such that
\begin{equation}
\label{eq:hDef}
 \sum_{j=1}^{J_i} h_{i,j}^2 = (a_i-\e)_+, \quad i=1,\dots,n. \end{equation}
Using functional calculus, let $\hat{b}_l \in C^*(b_l)$ be a function such that
\begin{equation}
\label{eq:hatbDef}
 \hat{b}_l(x)^2(b_l-\dl)_+(x) = 1,\quad x \in \suppo((b_l-2\dl)_+). \end{equation}
Now define
\begin{equation} t = \left(\begin{array}{ccc} t_{11} & \cdots & t_{1n} \\ \vdots && \vdots \\ t_{n1} & \cdots & t_{nn} \end{array}\right) \in M_n \otimes (C(X)\rtimes_\alpha G) \end{equation}
by
\begin{equation} t_{li} := \sum_{j: k_{i,j}=l} \hat{b}_lu_{s_{i,j}}h_{i,j} = \sum_{j:k_{i,j}=l} \hat{b}_l(h_{i,j}\circ\alpha_{s_{i,j}}^{-1})u_{s_{i,j}},\quad i,l=1,\dots,n. \end{equation}
By \eqref{eq:DynamicalCuntzCharacterization1} and since $h_{i,j} \in C_0(V_{i,j})$, for each $l$ the collection
\begin{equation}
\label{eq:DynamicalCuntzCharacterization4}
 \{\suppo(h_{i,j}\circ\alpha_{s_{i,j}}^{-1}): k_{i,j}=l\} = \{\alpha_{s_{i,j}}(\suppo(h_{i,j})): k(i,j)=l\} \end{equation}
is pairwise disjoint.
Therefore by Corollary \ref{cor:MatrixSeminormalizerCoeffs}, $t \in \mathcal{RN}_{M_n\otimes (C(X)\rtimes_\alpha G)}(D_n\otimes C(X))$.

In particular, $t^*(b-\dl)_+t$ is a diagonal matrix.
Moreover, for $i=1,\dots,n$, we compute the $(i,i)$-entry of $t^*(b-\dl)_+t$ to be
\begin{eqnarray}
\notag
\sum_{k=1}^n t_{ki}^*(b_k-\dl)_+t_{ki}
&=& \sum_{j,j': k_{i,j}=k_{i,j'}} h_{i,j}u_{s_{i,j}}^*\hat{b}_{k_{i,j}}^2(b_{k_{i,j}}-\dl)_+u_{s_{i,j'}}h_{i,j'} \\
&\stackrel{\eqref{eq:hatbDef}}=& \sum_{j,j': k_{i,j}=k_{i,j'}} u_{s_{i,j}}^*(h_{i,j}\circ\alpha_{s_{i,j}}^{-1})(h_{i',j'}\circ\alpha_{s_{i,j'}}^{-1})u_{s_{i,j'}}.
\end{eqnarray}
By pairwise disjointness of the collection \eqref{eq:DynamicalCuntzCharacterization4}, we have that $(h_{i,j}\circ\alpha_{s_{i,j}}^{-1})(h_{i,j'}\circ\alpha_{s_{i,j'}})=0$ whenever $k_{i,j}=k_{i,j'}$ and $j\neq j'$; thus the above simplifies to
\begin{equation}
\sum_{j=1}^{J_i} u_{s_{i,j}}^*(h_{i,j}\circ\alpha_{s_{i,j}}^{-1})^2u_{s_{i,j}} 
\stackrel{\eqref{eq:hDef}}= \sum_{j=1}^{J_i} h_{i,j}^2 = (a_i-\e)_+,
\end{equation}
as required.

(iii) $\Rightarrow$ (i):
As in Definition \ref{defn:DynamicalCuntz}, let $F_i$ be a compact subset of $\suppo(a_i)$ for $i=1,\dots,n$.
By compactness, there exists $\e>0$ such that $F_i \subseteq \suppo((a_i-\e)_+)$.
By (iii), let $t \in \mathcal{RN}_{M_n\otimes (C(X)\rtimes_\alpha G)}(D_n\otimes C(X))$ and $\dl>0$ be such that $t^*(b-\dl)_+t=(a-\e)_+$.
Write
\begin{equation} t = \left(\begin{array}{ccc} t_{11} & \cdots & t_{1n} \\ \vdots && \vdots \\ t_{n1} & \cdots & t_{nn} \end{array}\right) \end{equation}
and for each $i,j$, write
\begin{equation} t_{ij}=\sum_{g\in G} t_{i,j,g} u_g \end{equation}
where $t_{i,j,g} \in C(X)$ for each $i,j,g$.
By Corollary \ref{cor:MatrixSeminormalizerCoeffs}, for each $i$
\begin{equation}
\label{eq:DynamicalCuntzCharacterization3}
 \{\suppo(t_{i,j,g}): j=1,\dots,n,\ g\in G\}\text{ is pairwise disjoint}. \end{equation}
We now compute that $(a_i-\e)_+$, which is the $(i,i)$-entry of $t^*(b-\dl)_+t$, is equal to
\begin{eqnarray}
\notag
\sum_{k=1}^n t_{ki}^*(b_k-\dl)_+t_{ki}
&=& \sum_{k=1}^n \sum_{g,h\in G} u_g^* t_{k,i,g}^*(b_k-\dl)_+t_{k,i,h} u_h \\
\notag
&\stackrel{\eqref{eq:DynamicalCuntzCharacterization3}}=& \sum_{k=1}^n \sum_{s \in G} u_s^* |t_{k,i,s}|^2b_k u_s \\
&=& \sum_{k=1}^n \sum_{s\in G} (|t_{k,i,s}|^2(b_k-\dl)_+)\circ\alpha_s.
\end{eqnarray}
Since $F_i \subseteq \suppo((a_i-\e)_+)$, it follows that
\begin{equation} F_i \subseteq \bigcup_{k=1}^n \bigcup_{s\in G} \suppo((|t_{k,i,s}|^2b_k)\circ\alpha_s). \end{equation}
By compactness, we may choose $k_{i,1},\dots,k_{i,J_i} \in \{1,\dots,n\}$ and $s_{i,1},\dots,s_{i,J_i}\in G$ such that, upon setting
\begin{equation} U_{i,j} := \suppo((|t_{k_{i,j},i,s_{i,j}}|^2b_{k_{i,j}})\circ\alpha_{s_{i,j}}) = \alpha_{s_{i,j}}^{-1}(\suppo(t_{k_{i,j},i,s_{i,j}})\cap \suppo(b_{k_{i,j}})), \end{equation}
we have
\begin{equation} F_i \subseteq \bigcup_{j=1}^{J_i} U_{i,j}. \end{equation}

Also, the collection of sets of the form
\begin{equation} \{k_{i,j}\}\times \alpha_{s_{i,j}}(U_{i,j}) = \{k_{i,j}\} \times (\suppo(t_{k_{i,j},i,s_{i,j}}) \cap \suppo(b_{k_{i,j}})) \end{equation}
(where $i$ ranges over $\{1,\dots,n\}$ and $j$ ranges over $\{1,\dots,J_i\}$)
is contained in the collection of sets of the form
\begin{equation} \{k\} \times (\suppo(t_{k,i,s})\cap \suppo(b_k)). \end{equation}
Each of these is evidently contained in $\coprod_k \{k\} \times \suppo(b_k)$, and by \eqref{eq:DynamicalCuntzCharacterization3}, they are pairwise disjoint.
Therefore, we have
\begin{equation} \coprod_{i,j} \{k_{i,j}\}\times \alpha_{s_{i,j}}(U_{i,j}) \subseteq \coprod_k \{k\} \times \suppo(b_k), \end{equation}
as required.

(iii) $\Rightarrow$ (ii) is immediate.

(ii) $\Rightarrow$ (iii):
Assume (ii) holds and let $\e>0$ be given.
Then for some $k$ we have $\|t_k^*bt_k-a\|<\e/2$, and thus there exists $\dl>0$ such that $\|t_k^*(b-\dl)_+t_k-a\|<\e$.
Since $t_k \in \mathcal{RN}_{M_n\otimes (C(X)\rtimes_\alpha G)}(D_n\otimes C(X))$, it follows that $t_k^*(b-\dl)_+t_k \in D_n\otimes C(X)$, so applying \cite[Proposition 2.2]{Rordam:JFA92} to this algebra (and using that it is commutative), we see that there exists $s \in D_n\otimes C(X)$ such that
\begin{equation} s^*t_k^*(b-\dl)_+t_ks = (a-\e)_+. \end{equation}
Thus (iii) holds with $t:=t_ks$.
\end{proof}

\begin{remark}
\label{rmk:Kerr12.3}
As noted in the above proof, the argument for (i) $\Rightarrow$ (iii) is a variant on the proof of \cite[Lemma 12.3]{Kerr:JEMS}.
We note for use later that, in fact, the element $v$ constructed in the proof of \cite[Lemma 12.3]{Kerr:JEMS} is an $r$-normalizer, for example by writing it as
\begin{equation} v=\sum_{i=1}^n ((fh_i)^{1/2}\circ\alpha_{s_i}^{-1}) u_{s_i} \end{equation}
(where we use $\alpha:G \curvearrowright X$ to denote the action) and then using Corollary \ref{cor:SeminormalizerCoeffs}.
\end{remark}

\section{Almost finiteness, dynamical comparison, and tracial $\mathcal Z$-stability}
\label{sec:Z}

In this section we define tracial $\mathcal{Z}$-stability for sub-$C^*$-algebras and prove Theorem \ref{thm:A} (as Corollary \ref{cor:AlmostFiniteZstable}). We first recall the definition of dynamical comparison.

\begin{defn}[{\cite[Definition 3.2]{Kerr:JEMS}}] \label{defn:DynamicalComparison}
Let $G$ be a countable discrete group, let $X$ be a compact metrizable space, and let $\alpha:G\curvearrowright X$ be an action by homeomorphisms. We say $\alpha$ has \emph{dynamical comparison} if $O \prec V$ for all open sets $O, V\subseteq X$ satisfying $\mu(O) < \mu(V)$ for every $G$-invariant Borel probability measure $\mu$.
\end{defn}

Given $\tau \in T(C(X)\rtimes_\alpha G)$, and $a \in C(X)_+$, define
\begin{equation} d_\tau(a):=\lim_{n\to\infty} \tau(a^{1/n}), \end{equation}
i.e., the value of the measure associated to $\tau$ evaluated on $\suppo(a)$.
When the action $\alpha$ is \emph{free}, the $G$-invariant Borel probability measures on $X$ correspond exactly to tracial states on $C(X)\rtimes_\alpha G$ (see, for example, \cite[Theorem 11.1.22]{GKPT:book}). Therefore in this case dynamical comparison can be reformulated as follows: the action $\alpha$ has dynamical comparison if and only if for any $a,b \in C(X)_+$, if $d_\tau(a) < d_\tau(b)$ for all $\tau \in T(C(X)\rtimes_\alpha G)$ then $a \preccurlyeq b$ (as in Definition \ref{defn:DynamicalCuntz}).

In $C^*$-algebra theory, $\mathcal Z$-stability is the property of tensorially absorbing a certain canonical $C^*$-algebra, called the Jiang--Su algebra $\mathcal Z$.
This $C^*$-algebra was defined in \cite{JiangSu}, but in practice it is often a McDuff-type characterization of $\mathcal Z$-stability that is used (see \cite[Proposition 2.14]{Winter12}, a combination of results by R\o rdam--Winter \cite{RordamWinter}, Kirchberg \cite{Kir06}, and Toms--Winter \cite{TomsWinter}).
Building on ideas of Matui and Sato (\cite{MS:Acta}), Hirshberg and Orovitz defined ``tracial $\mathcal Z$-stability'', an a priori weakening of this McDuff-type condition, and proved that it is equivalent to $\mathcal Z$-stability for simple separable unital nuclear $C^*$-algebras (see \cite[Definition 2.1, Proposition 2.2, and Theorem 4.1]{HirshbergOrovitz}) (nuclearity is the key hypothesis that enables this equivalence).
The following is a version of tracial $\mathcal Z$-stability for a sub-$C^*$-algebra, where a key role is played by (one-sided) normalizers of the smaller algebra; tracial $\mathcal Z$-stability for the algebra $A$ is precisely the case $D=A$.

\begin{defn}
\label{defn:Zstable}
Let $(D \subseteq A)$ be a sub-$C^*$-algebra with $A$ unital, and such that $\unit_A \in D$.
For $a,b \in D_+$, write $a \precsim_{(D \subseteq A)} b$ if there is a sequence $(t_k)_{k=1}^\infty$ in $\mathcal{RN}_A(D)$ such that $\lim_{k\to\infty} \|t_k^*bt_k - a\|=0$.

We say that $(D \subseteq A)$ is \emph{tracially $\mathcal Z$-stable} if for every $n \in \N$, every tolerance $\e>0$, every finite set $\mathcal F \subset A$, and every $h \in D_+ \setminus \{0\}$, there exists a c.p.c.\ order zero map $\phi:M_n \to A$ such that:
\begin{enumerate}
\item $\phi(\mathcal N_{M_n}(D_n)) \subseteq \mathcal N_A(D)$,
\item $\unit_A - \phi(\unit_n) \precsim_{(D \subseteq A)} h$,
\item $\|[a,\phi(x)]\| < \e$ for all $a \in \mathcal F$ and every contraction $x \in M_n$.
\end{enumerate}
(Note in (ii) that, by (i) and Lemma \ref{lem:NormalizersPreserving} $\phi(\unit_n) \in D$, so the statement makes sense.)
\end{defn}

We will make use of the generalized type semigroup defined in \cite{Ma:ETDS}. The main result of \cite{Ma:ETDS} shows that dynamical comparison is equivalent to almost unperforation of this semigroup.

\begin{defn}[{\cite[the paragraphs after Lemma 2.2]{Ma:ETDS}}]
 Let $G$ be a countable 	discrete group, let $X$ be a compact Hausdorff space, and let $\alpha:G\curvearrowright X$ be a free action by homeomorphisms.
	Define $a \approx b$ to mean that both $a \preccurlyeq b$ and $b \preccurlyeq a$ (as in Definition \ref{defn:DynamicalCuntz}), and set
	\begin{equation} W(X,G):=\bigcup_{n=1}^\infty (D_n\otimes C(X))_+/\approx. \end{equation}
	For $a \in (D_n\otimes C(X))_+$, we use $[a]$ to denote its equivalence class in $W(X,G)$.
	The preorder $\preccurlyeq$ induces an order $\leq$ on $W(X,G)$, and there is a well-defined addition operation on $W(X,G)$ given by
	\begin{equation} [a] + [b] = [a \oplus b]. \end{equation}
	$W(X, G)$ is then a partially ordered abelian semigroup.
\end{defn}

\begin{thm} 
	\label{thm:ZstableComparison}
	Let $G$ be a countable discrete infinite amenable group, let $X$ be a compact metrizable space, and let $\alpha:G \curvearrowright X$ be a free minimal action.
	If $(C(X) \subseteq C(X)\rtimes_\alpha G)$ is tracially $\mathcal Z$-stable then $\alpha$ has dynamical comparison.
\end{thm}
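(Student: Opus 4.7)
The plan is to adapt the Hirshberg--Orovitz argument that tracial $\mathcal{Z}$-stability implies strict comparison, while tracking the $r$-normalizer structure throughout. By the tracial reformulation of dynamical comparison recorded after Definition \ref{defn:DynamicalComparison}, it suffices to show that for $a,b \in C(X)_+$ with $d_\tau(a) < d_\tau(b)$ for every $\tau \in T(A)$ (writing $A := C(X)\rtimes_\alpha G$), we have $a \preccurlyeq b$. By Proposition \ref{prop:DynamicalCuntzCharacterization} applied with $n=1$, this reduces to producing, for each $\e > 0$, a $\delta > 0$ and an $r$-normalizer $t \in \mathcal{RN}_A(C(X))$ satisfying $t^*(b-\delta)_+ t = (a-\e)_+$.

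Fix $\e > 0$. Using compactness of $T(A)$ and the lower semicontinuity of $d_\tau$, I would choose $\delta > 0$ small and a nonzero positive $h \in C(X)_+$ with $h \precsim_{(C(X)\subseteq A)} (b-\delta)_+$ (trivially via cut-down by elements of $C(X)$, which are $r$-normalizers), together with a uniform trace gap
\begin{equation*}
d_\tau((a-\e)_+) + d_\tau(h) < d_\tau((b-\delta)_+) \quad \text{for all } \tau \in T(A).
\end{equation*}
Then I pick a large $n$, a small tolerance $\e'>0$, and apply tracial $\mathcal{Z}$-stability with $\mathcal F = \{a, b, (a-\e)_+, (b-\delta)_+\}$ to obtain a c.p.c.\ order zero map $\phi:M_n \to A$ satisfying conditions (i)--(iii) of Definition \ref{defn:Zstable}. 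A key structural remark: the positive elements $\phi(e_{ii})$ are normalizers, so by Lemma \ref{lem:NondegNormalizer} they in fact lie in $C(X)$; meanwhile the Winter--Zacharias structure theorem yields the exact orthogonality $\phi(e_{i1})^* \phi(e_{j1}) = \phi(e_{1i})\phi(e_{j1}) = h\phi(e_{1i}e_{j1}) = 0$ for $i \neq j$. Consequently, by Lemma \ref{lem:SumOfNormalizers}(i), any element of the form $\sum_i \phi(e_{i1}) c_i$ with $c_i \in C(X)$ is automatically an $r$-normalizer.

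The construction of $t$ then proceeds by splitting $(b-\delta)_+$ into two pieces $b_1, b_2$ with disjoint open supports, and matching them against the decomposition
\begin{equation*}
(a-\e)_+ \;\approx\; \phi(\unit_n)(a-\e)_+ + (\unit - \phi(\unit_n))(a-\e)_+
\end{equation*}
coming from near-commutativity of $\phi(\unit_n)$ with $(a-\e)_+$. For the ``complement'' summand, transitivity of $\precsim_{(C(X)\subseteq A)}$ together with conjugation by $(a-\e)_+^{1/2} \in C(X)$ (an $r$-normalizer) yields $(\unit - \phi(\unit_n))(a-\e)_+ \precsim \unit - \phi(\unit_n) \precsim h \precsim b_1$. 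For the ``main'' summand $\sum_i \phi(e_{ii})(a-\e)_+$, I would construct an $r$-normalizer of the form $\sum_i \phi(e_{i1}) c_i$, with $c_i \in C(X)$ chosen to first partition $\suppo((a-\e)_+)$ along the open supports of the $\phi(e_{ii})$ and then shift each resulting piece to a disjoint slice of $\suppo(b_2)$; this is modelled on the construction in the proof of Proposition \ref{prop:DynamicalCuntzCharacterization}(i)$\Rightarrow$(iii) (cf.\ Remark \ref{rmk:Kerr12.3}). Combining the two $r$-normalizers -- whose source supports sit in the disjoint portions $b_1, b_2$ of $(b-\delta)_+$ -- via Lemma \ref{lem:SumOfNormalizers}(i) produces a single $r$-normalizer $t$, and a R\o rdam-type perturbation (harmlessly absorbed into $\e$-freedom, replacing $(a-\e)_+$ by $(a-2\e)_+$) upgrades the approximate identity to an exact one.

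The main obstacle is the construction for the ``main'' summand together with the support bookkeeping needed for the combine step. While the $\phi(e_{i1})$ are $r$-normalizers with the required orthogonality, $(b-\delta)_+$ does not commute with them exactly, so every step producing the identity $t^*(b-\delta)_+ t = (a-\e)_+$ must be done approximately with the tolerance $\e'$ chosen at the outset; this forces careful quantitative bookkeeping. Moreover, selecting the scalars $c_i$ to realize the required Cuntz subequivalence to $b_2$ relies on the trace gap together with a support-level verification in $X$, using Corollary \ref{cor:SeminormalizerCoeffs} to decode the group elements appearing in the expansion of each $\phi(e_{i1})$ and to verify that the induced shifts can cover $\suppo((a-\e)_+)$ by pairwise disjoint images inside $\suppo(b_2)$. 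This is precisely where the dynamical information encoded by the tracial $\mathcal Z$-stability hypothesis is extracted, mirroring the way Kerr's almost finiteness produces Rokhlin-type towers.
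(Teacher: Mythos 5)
There is a genuine gap, and it sits exactly where you flag ``the main obstacle'': the construction of the $r$-normalizer $\sum_i \phi(e_{i1})c_i$ moving the main summand into $\suppo(b_2)$. The functions $c_i \in C(X)$ are multiplication operators and move no supports, while the order zero map $\phi$ supplied by tracial $\mathcal Z$-stability bears no relation whatsoever to $b$ beyond approximate commutation --- by the last proposition of the paper it is a castle order zero map, but nothing forces its castle levels or the translations encoded in the $\phi(e_{i1})$ to carry pieces of $\suppo((a-\e)_+)$ into $\suppo(b_2)$ disjointly. To ``verify that the induced shifts can cover $\suppo((a-\e)_+)$ by pairwise disjoint images inside $\suppo(b_2)$'' is precisely to establish a dynamical subequivalence between cut-downs of $a$ and pieces of $b$, i.e., the conclusion of the theorem: your argument is circular at this point. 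Note also that in Hirshberg--Orovitz \cite{HirshbergOrovitz} the analogous passage from a trace gap to a comparison is not done by the order zero map alone; it rests on a Goodearl--Handelman-type semigroup argument (cf.\ \cite{Rordam:IJM}) producing $(n+1)\langle a\rangle \le n\langle b\rangle$ from the gap over \emph{all} states on the Cuntz semigroup, and your sketch omits any analogue of that step --- an omission made fatal here because generic Cuntz witnesses need not be $r$-normalizers.

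The paper fills exactly this hole with Ma's theorem on the generalized type semigroup (\cite[Theorem 3.9]{Ma:ETDS}): since for free actions the invariant measures are precisely the traces, the hypothesis $d_\tau(f)<d_\tau(g)$ for all $\tau$ yields $(n+1)[f]\leq n[g]$ in $W(X,G)$ for some $n$, hence $\unit_{M_n}\otimes f \preccurlyeq \unit_{M_n}\otimes g$; Proposition \ref{prop:DynamicalCuntzCharacterization} then converts this into a concrete matrix $r$-normalizer $v$ with $v^*(\unit_{M_n}\otimes(g-\dl)_+)v = \unit_{M_n}\otimes(f-\tfrac{\e}{2})_+$. It is $v$, not $\phi$, that performs the dynamical transfer; the role of tracial $\mathcal Z$-stability is only to compress the $n$ columns of $v$ into a single $r$-normalizer $r=\sum_{i,j}\phi(e_{ii})v_{ij}\phi(e_{ij})$ (``dividing by $n$''), with the remainder $\unit-\phi(\unit_n)$ absorbed into a small orthogonal piece of $g$ split off by functional calculus --- which, incidentally, requires the preliminary reduction (using minimality and $G$ infinite) that $0$ is not isolated in the spectrum of $g$, a point your $b_1,b_2$ splitting also quietly needs. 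Your overall architecture (remainder piece via condition (ii), main piece via the matrix units, gluing via Lemma \ref{lem:SumOfNormalizers}) matches the paper's, so the fix is to insert the type-semigroup step: derive the $(n+1)$-versus-$n$ dynamical comparison first, and let that, rather than the order zero map, supply the shifts.
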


\begin{proof}
	Let $f,g \in C(X)_+$, and assume that $d_\tau(f)<d_\tau(g)$ for all $\tau \in T(C(X)\rtimes_\alpha G)$.
	We need to show that $f \preccurlyeq g$ (as in Definition \ref{defn:DynamicalCuntz}).
	
	Since the action is free and minimal and $G$ is infinite, $X$ has no isolated points.
	Choose any point $x_0 \in X$ such that $g(x_0)\neq 0$, let $h \in C(X)_+$ be a positive contraction which vanishes at $x_0$ and is nonzero everywhere else, and consider $g':=hg$.
	Since the action is minimal, $\mu(\{x_0\})=0$ for every $G$-invariant measure on $X$.
	Since the $G$-invariant probability measures on $X$ correspond to traces on $C(X)\rtimes_\alpha G$, it follows that $d_\tau(g)=d_\tau(g')$ for all $\tau \in T(C(X)\rtimes_\alpha G)$.
	Also, $0$ is not an isolated point of the spectrum of $g'$.
	Thus by replacing $g$ with $g'$, we may assume that $0$ is not an isolated point in the spectrum of $g$. 
	
	Now by \cite[Theorem 3.9]{Ma:ETDS}, and again since the $G$-invariant probability measures on $X$ correspond to traces on $C(X)\rtimes_\alpha G$, the condition $d_\tau(f)<d_\tau(g)$ for all $\tau \in T(C(X)\rtimes_\alpha G)$ is equivalent to $(n+1)[f]\leq n[g]$ in $W(X,G)$, for some $n \in \mathbb N$.
	
	Since $n[f] \leq (n+1)[f]\leq n[g]$, this means that $\unit_{M_n}\otimes f \preccurlyeq \unit_{M_n} \otimes g$.
	Let $\e>0$; we will show that there exists $t \in \mathcal{RN}_{C(X)\rtimes_\alpha G}(C(X))$ such that $\|t^*gt-f\|<\e$, which suffices to show $f \preccurlyeq g$ by Proposition \ref{prop:DynamicalCuntzCharacterization}.
	By that same proposition, we have that there exists $\dl>0$ and
	\begin{equation} v=\left(\begin{array}{ccc} v_{11} &\cdots & v_{1n} \\ \vdots&&\vdots \\ v_{n1} &\cdots & v_{nn} \end{array}\right) \in \mathcal{RN}_{M_n\otimes C(X)\rtimes_\alpha G}(D_n \otimes C(X)) \end{equation}
	such that $v^*(\unit_{M_n}\otimes (g-\dl)_+)v=\unit_{M_n}\otimes (f- \frac{\e}{2})_+$.
	By Lemma \ref{lem:MatrixSeminormalizer}, we have
	\begin{align}
	\label{eq:Comparison1}
	&v_{ij} \in \mathcal{RN}_{C(X)\rtimes_{ \alpha } G}(C(X)),\quad &&i,j=1,\dots,n, \text{ and} \\
	\label{eq:Comparison2}
	&v_{ki}^*av_{kj}=0,\quad && i\neq j,\ a\in C(X).
	\end{align}
	By looking at the entries of $v^*(\unit_{M_n}\otimes (g-\dl)_+)v=\unit_{M_n}\otimes (f-\frac{\e}{2})_+$, we obtain for all $j$
	\begin{equation}
	\label{eq:Comparison3}
	\sum_{i=1}^n v_{ij}^*(g-\dl)_+v_{ij} = (f-\frac{\e}{2})_+.
	\end{equation}
	
	Since $0$ is not an isolated point of the spectrum of $g$, we may use functional calculus to find a nonzero element $d \in C^*(g)_+ \subseteq C(X)_+$, along with orthogonal elements $\hat{d},\hat{g} \in C^*(g)_+\subseteq C(X)_+$ such that
	\begin{equation}
	\label{eq:Comparison4}
	g\hat{d}^2 = d \quad \text{and} \quad g\hat{g}^2 = (g-\dl)_+. \end{equation}
	
	Set
	\begin{equation} \eta:=\frac\e{8n^2+3} \end{equation}
	and using tracial $\mathcal Z$-stability of $(C(X) \subseteq C(X)\rtimes_\alpha G)$, let $\phi:M_n\to C(X)\rtimes_\alpha G$ be an order zero map such that:
	\begin{enumerate}
		\item $\phi(\mathcal N_{M_n}(D_n)) \subseteq \mathcal N_{C(X)\rtimes_\alpha G}(C(X))$,
		\item $\unit_A - \phi(\unit_n) \preccurlyeq d$ (note that $\precsim_{(C(X) \subseteq C(X)\rtimes_\alpha G)}$ in Definition \ref{defn:Zstable} is the same as $\preccurlyeq$ from Definition \ref{defn:DynamicalCuntz}, by Proposition \ref{prop:DynamicalCuntzCharacterization}), and 
		\item $\|[a,\phi(x)]\| < \eta$ for all $a \in \{v_{ij}:i,j=1,\dots,n\} \cup \{(g-\dl)_+\}$ and every contraction $x \in M_n$.
	\end{enumerate}
	Define
	\begin{equation} r:=\sum_{i,j=1}^n \phi(e_{ii})v_{ij}\phi(e_{ij}). \end{equation}
	Note that by (i) and Lemma \ref{lem:NormalizersPreserving}, $\phi(e_{ii})\in C(X)$.
	Using this, for $a \in C(X)$, we have
	\begin{eqnarray}
	\notag
	r^*ar &=& \sum_{i,j,k,l=1}^n \phi(e_{ji})v_{ij}^*\phi(e_{ii})a\phi(e_{kk})v_{kl}\phi(e_{kl}) \\
	\notag
	&=& \sum_{i,j,k,l=1}^n \phi(e_{ji})v_{ij}^*\phi(e_{ii})\phi(e_{kk})av_{kl}\phi(e_{kl}) \\
	\notag
	&=& \sum_{i,j,l=1}^n \phi(e_{ji})v_{ij}^*\phi(e_{ii})^2av_{il}\phi(e_{il}) \\
	&\stackrel{\eqref{eq:Comparison2}}=& \sum_{i,j=1}^n \phi(e_{ji})v_{ij}^*\phi(e_{ii})^2av_{ij}\phi(e_{ij}),
	\end{eqnarray}
	and by \eqref{eq:Comparison1}, and (i), this is in $C(X)$.
	This shows that $r \in \mathcal{RN}_{C(X)\rtimes_\alpha G}(C(X))$.
	
	Next, in the case that $a=(g-\dl)_+$, we get
	\begin{eqnarray}
	\notag
	r^*(g-\dl)_+r
	\notag
	&=& \sum_{i,j=1}^n \phi(e_{ji})v_{ij}^*\phi(e_{ii})(g-\dl)_+v_{ij}\phi(e_{ij}) \\
	\notag
	&\capprox{4n^2\eta}& \sum_{i,j=1}^n \phi(e_{ji})\phi(e_{ii})^2\phi(e_{ij})v_{ij}^*(g-\dl)_+v_{ij} \\
	\notag
	&=& \sum_{i,j=1}^n \phi(e_{jj})^4v_{ij}^*(g-\dl)_+v_{ij} \\
	\notag
	&\stackrel{\eqref{eq:Comparison3}}=& \sum_{j=1}^n \phi(e_{jj})^4 (f-\frac{\e}{2})_+ \\
	&\capprox{\e/2}& \phi(\unit_{M_n})^4 f.
	\end{eqnarray}
	Next, we note that 
$\unit_{C(X)\rtimes_\alpha G}-\phi(\unit_{M_n})^4$ is Cuntz subequivalent to $\unit_{C(X)\rtimes_\alpha G}-\phi(\unit_{M_n})$ in $C^*(\unit_{C(X)\rtimes_\alpha G},\phi(\unit_{M_n}))$ (in fact, they are Cuntz equivalent), and so combining this with (ii), we obtain some $s \in \mathcal{RN}_{C(X)\rtimes_\alpha G}(C(X))$ such that
	\begin{equation} s^*ds \approx_\eta (\unit_{C(X)\rtimes_\alpha G}-\phi(\unit_{M_n})^4)f. \end{equation}
	Define $t:=\hat{g}r+\hat{d}s$ (using $\hat{g},\hat{d}$ defined just above \eqref{eq:Comparison4}).
	Since $\hat{g},\hat{d}$ are orthogonal and in $C(X)$, it follows that $(\hat{g}r)^*a(\hat{d}s)=0$ for all $a \in C(X)$, and thus $t\in \mathcal{RN}_{C(X)\rtimes_\alpha G}(C(X))$ by Lemma \ref{lem:SumOfNormalizers}.
	Moreover,
	\begin{eqnarray}
	\notag
	t^*gt &=& r^*\hat{g}g\hat{g}r+s^*\hat{d}g\hat{d}s \\
	\notag
	&\stackrel{\eqref{eq:Comparison4}}=& r^*(g-\dl)_+r + s^*ds \\
	&\capprox{4n^2\eta+\frac{\e}{2}+\eta}& \phi(\unit_{M_n})^4f+(1-\phi(\unit_{M_n})^4)f = f.
	\end{eqnarray}
	Since $4n^2\eta+\eta<\frac\e2$, we are done.
\end{proof}

We now establish Theorem \ref{thm:A}. The following definition of a ``castle'' is borrowed from David Kerr, except that (for later use) we allow the castle to possibly have infinitely many towers.

\begin{defn}[{cf.\ \cite[Definitions 4.1 and 5.7]{Kerr:JEMS}}]
Let $G$ be a countable discrete group, let $X$ be a compact Hausdorff space, and let $\alpha:G\curvearrowright X$ be a free action.
A \emph{castle} is a collection $\{(V_i,S_i)\}_{i\in I}$ where each $V_i$ is a subset of $X$ and each $S_i$ is a finite subset of $G$, such that the collection
\begin{equation} \{sV_i: s \in S_i,\, i \in I\} \end{equation}
is pairwise disjoint.
Each $(V_i,S_i)$ is called a \emph{tower}, the sets $S_i$ are called \emph{shapes}, and the sets $sV_i$ (where $s \in S_i$) are called \emph{levels} of the castle.
\end{defn}

We recall the definition of almost finiteness for a group acting by homeomorphisms.
Kerr's definition partially generalizes an earlier concept for locally compact \'etale groupoids with compact totally disconnected unit spaces, which Matui defined and used to prove strong results about the associated topological full group (see \cite{Matui:PLMS}, particularly Definition 6.2).

\begin{defn}[{\cite[Definition 8.2]{Kerr:JEMS}}]
Let $G$ be a countable discrete group, let $X$ be a compact metrizable space, and let $\alpha:G\curvearrowright X$ be a free action.
The action is \emph{almost finite} if for every finite subset $K \subset G$, and every $\dl>0$, there exists:
\begin{enumerate}
\item a castle $\{(V_i,S_i)\}_{i\in I}$ such that $I$ is finite, each level is open with diameter at most $\dl$, and each shape is $(K,\dl)$-invariant (i.e., $|gS_i \triangle S_i|/|S_i| < \dl$ for all $g \in K$ and all $i\in I$), and
\item a set $S_i' \subseteq S_i$ for each $i\in I$ such that $|S_i'|<\dl |S_i|$ and
\begin{equation} X \setminus \coprod_{i\in I} S_iV_i \prec \coprod_{i\in I} S_i'V_i, \end{equation}
using $\prec$ from Definition \ref{defn:DynamicalCuntz}.
\end{enumerate}
\end{defn}

The following corollary shows how tracial $\mathcal{Z}$-stability (for sub-$C^*$-algebras) fits into different regularity-type dynamical properties.

\begin{cor} (Theorem \ref{thm:A}) \label{cor:AlmostFiniteZstable}
Let $G$ be a countable discrete amenable group, let $X$ be a compact metrizable space, and let $\alpha:G \curvearrowright X$ be a free minimal action. Consider the following conditions:
\begin{enumerate}
	\item $\alpha$ is almost finite;
	\item $(C(X) \subseteq C(X)\rtimes_\alpha G )$ is tracially $\mathcal Z$-stable;
	\item $\alpha$ has dynamical comparison.
\end{enumerate}
Then $(i)\Rightarrow (ii) \Rightarrow (iii)$, and if $\alpha$ has the small boundary property then all three conditions are equivalent.
\end{cor}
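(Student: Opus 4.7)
The statement consists of three implications: $(i) \Rightarrow (ii)$, $(ii) \Rightarrow (iii)$, and, assuming the small boundary property, $(iii) \Rightarrow (i)$. The middle implication is already Theorem \ref{thm:ZstableComparison}, and the last one with the small boundary property is the main result of \cite{KerrSzabo}, so I would cite these directly. The genuinely new content is $(i) \Rightarrow (ii)$.

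For $(i) \Rightarrow (ii)$, I would follow the overall architecture of Kerr's proof that almost finiteness implies $\mathcal Z$-stability of the crossed product (\cite{Kerr:JEMS}), but upgrade every step to keep track of the $r$-normalizer structure. Given $n \in \N$, $\e>0$, a finite set $\mathcal F \subseteq C(X)\rtimes_\alpha G$, and a nonzero $h \in C(X)_+$, by density I may assume $\mathcal F \subseteq \{fu_g : g \in K\}$ for a finite $K\subseteq G$ and contractions $f \in C(X)$. I would then apply almost finiteness to $K$ and a sufficiently small $\dl>0$ to obtain a finite castle $\{(V_i,S_i)\}_{i\in I}$ with $(K,\dl)$-invariant shapes and small-remainder sets $S_i'\subseteq S_i$ as in the definition.

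The map $\phi:M_n \to C(X)\rtimes_\alpha G$ is then built from the castle: tile each $S_i$ (up to discarding a $\dl$-small remainder absorbed into $S_i'$) into blocks of size $n$, choose a continuous cutoff $\chi_i \in C(V_i)_+^1$ equal to $1$ on a compact set carrying almost all of $V_i$, and set
\begin{equation*}
\phi(e_{jk}) := \sum_{i \in I} \sum_{\text{blocks } B=\{s_1,\dots,s_n\}\subseteq S_i} u_{s_j}\chi_i u_{s_k}^*.
\end{equation*}
Order zero and approximate multiplicativity are forced by disjointness of the levels, exactly as in Kerr. Approximate commutation with $fu_g \in \mathcal F$ follows from $(K,\dl)$-invariance: for $g\in K$, the $g$-translate of a block lies in the same tower except on a $\dl$-proportion of blocks. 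The key new observation is condition (i) of Definition \ref{defn:Zstable}: each $\phi(e_{jk})$ rewrites as $\sum_g f_g u_g$ with the open supports $\suppo(f_g)$ pairwise disjoint (because each level $sV_i$ appears for at most one pair $(i,B)$), so Corollary \ref{cor:SeminormalizerCoeffs} makes it a normalizer, and Lemma \ref{lem:OrderZeroMatrixUnit} promotes this to $\phi(\mathcal N_{M_n}(D_n)) \subseteq \mathcal N_{C(X)\rtimes_\alpha G}(C(X))$.

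For condition (ii), the defect $\unit - \phi(\unit_n)$ is supported (up to the small error from the $\chi_i$) on $(X\setminus \coprod_i S_iV_i) \cup \coprod_i S_i'V_i$, which is dynamically dominated by $\coprod_i S_i'V_i$ via almost finiteness. By minimality, finitely many translates of $\suppo(h)$ cover $X$, so $\coprod_i S_i'V_i \prec \suppo(h)$ once $\dl$ is small enough; Remark \ref{rmk:Kerr12.3} guarantees the witness is an $r$-normalizer, and Proposition \ref{prop:DynamicalCuntzCharacterization} then converts this to $\unit - \phi(\unit_n) \precsim_{(C(X) \subseteq C(X)\rtimes_\alpha G)} h$. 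The main obstacle is the bookkeeping required to ensure that a single choice of the castle parameters $(K,\dl)$ simultaneously delivers all three conditions, and to verify that every auxiliary Cuntz-type subequivalence in the construction is witnessed by $r$-normalizers rather than arbitrary contractions; the conceptual payoff, which justifies the definition of tracial $\mathcal Z$-stability, is that the matrix-unit images arising from castle tiles are automatically supported on bisection-like subsets of the transformation groupoid, so normalizer preservation comes essentially for free.
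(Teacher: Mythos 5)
Your treatment of (ii) $\Rightarrow$ (iii) and of (iii) $\Rightarrow$ (i) under the small boundary property matches the paper exactly (Theorem \ref{thm:ZstableComparison} and \cite[Theorem 6.1]{KerrSzabo}). But your reconstruction of (i) $\Rightarrow$ (ii) has a genuine gap at the approximate centrality step. The map you define,
\begin{equation*}
\phi(e_{jk}) := \sum_{i \in I} \sum_{B=\{s_1,\dots,s_n\}\subseteq S_i} u_{s_j}\chi_i u_{s_k}^*,
\end{equation*}
with an arbitrary partition of each shape into blocks of size $n$ and unweighted cutoffs, does \emph{not} approximately commute with $u_g$ in operator norm. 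The issue is that $(K,\dl)$-invariance of $S_i$ only controls the \emph{proportion} of $s \in S_i$ with $gs \notin S_i$; this gives smallness in a trace-like sense, not in norm. Concretely, $u_g\phi(e_{jj})u_g^*$ is a sum of functions supported on the levels $gs_j^B V_i$, and even when $gs_j^B \in S_i$, it will generically sit in position $k \neq j$ of a different block $B'$, so $u_g\phi(e_{jj})u_g^* - \phi(e_{jj})$ contains bump functions of height close to $1$ on levels where the other term vanishes; the commutator norm is of order $1$, not $\dl$, no matter how invariant the shapes are. An exactly $g$-equivariant block structure for all $g \in K$ simultaneously does not exist in general. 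This is precisely the difficulty that Kerr's proof of \cite[Theorem 12.4]{Kerr:JEMS} is engineered to overcome: the shapes are tiled via the Downarowicz--Huczek--Zhang tiling theorem (\cite[Theorem 12.2]{Kerr:JEMS}, giving the grid structure $\Lambda_{k,i}$ on the sets $C_{k,l,m}^{(i)}$), and, crucially, the coefficients carry the ramp weights $\frac{q}{Q}$ over the sets $B_{k,l,c,q}$ (a Berg-technique averaging), so that conjugation by $u_g$ shifts the ramp by a bounded amount and the commutator is small in norm. Your formula omits exactly this weighting, and without it the construction fails.

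Note also that the paper does not rebuild the construction at all: it black-boxes Kerr's proof of \cite[Theorem 12.4]{Kerr:JEMS}, which already delivers the order zero map with the commutator estimates and the Cuntz domination of $\unit - \phi(\unit_n)$, and then adds only the two genuinely new verifications --- that Kerr's map is a \emph{castle order zero map} and hence normalizer-preserving (Remark \ref{remark:KerrOrderZeroMap}, which is where the weights reappear as $f_{(k,l,m,q,c,t)} = \frac{q}{Q}h_k$), and that the witness $v$ in \cite[Lemma 12.3]{Kerr:JEMS} is an $r$-normalizer (Remark \ref{rmk:Kerr12.3}), so that the defect condition upgrades to $\precsim_{(C(X)\subseteq C(X)\rtimes_\alpha G)}$. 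Your instinct that castle-supported matrix units are automatically normalizers is correct and is exactly the paper's mechanism; but the claim that approximate centrality is ``forced by disjointness of the levels, exactly as in Kerr'' conflates your simplified map with Kerr's weighted one. Separately, your argument for condition (ii) of Definition \ref{defn:Zstable} is too quick: covering $X$ by finitely many translates of $\suppo(h)$ does not by itself yield $\coprod_i S_i'V_i \prec \suppo(h)$; one needs the smallness $|S_i'| < \dl|S_i|$ together with the comparison-type consequence of almost finiteness (this is the actual content of \cite[Lemma 12.3]{Kerr:JEMS}), which is why the paper routes this step through that lemma rather than through minimality alone.
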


As mentioned in the introduction, our definition of tracial $\mathcal Z$-stability for a sub-$C^*$-algebra is largely inspired by Kerr's proof that almost finiteness implies $\mathcal Z$-stability of $C(X)\rtimes_\alpha G$ (\cite[Theorem 12.4]{Kerr:JEMS}), and indeed his proof shows the implication (i) $\Rightarrow$ (ii) (we flesh out the details below).
For the rest, one need only combine Theorem \ref{thm:ZstableComparison} with Kerr and Szab\'o's proof that dynamical comparison combined with the small boundary property implies almost finiteness (\cite[Theorem 6.1]{KerrSzabo}).

To verify (i) $\Rightarrow$ (ii), we take a closer look at the normalizer-preserving condition in the definition of $\mathcal Z$-stability for a sub-$C^*$-algebra.
We shall consider a general construction of a c.p.c.\ map into a crossed product, given the following data.
Let $G$ be a countable discrete group, let $X$ be a compact metrizable space, and let $\alpha:G \curvearrowright X$ be an action.
Let $T$ be a countable set, and for each $t\in T$, let $f_t \in C(X)_+$ and let $S_t=\{s_{t,1},\dots,s_{t,n}\}$ be a subset of $G$ of size $n$.
Suppose that:
\begin{enumerate}
\item $\lim_{t\to\infty} \|f_t\| = 0$, and
\item $((\suppo(f_t),S_t)_{t=1}^\infty$ is a castle.
\end{enumerate}
Also for each $t\in T$ and $i=1,\dots,n$, let $\theta_{t,i}:\suppo(f_t) \to \mathbb T$ be a continuous function.
Define $\phi:M_n \to C(X)\rtimes_\alpha G$ by
\begin{equation} \phi(e_{ij}):=\sum_{t\in T} u_{s_{t,i}}\theta_{t,i}\bar\theta_{t,j}f_tu_{s_{t,j}}^* \end{equation}
and extending linearly.
We call such a map a \emph{castle order zero map}. It is not hard to check the following facts.
\begin{enumerate}
\item The sum defining $\phi(e_{ij})$ converges in norm.
\item $\phi$ is c.p.c.\ order zero and normalizer-preserving, i.e., $\phi(\mathcal N_{M_n}(D_n)) \subseteq \mathcal N_{C(X)\rtimes_\alpha G}(C(X))$ (by Lemma \ref{lem:OrderZeroMatrixUnit} it is enough to check that $\phi(e_{ij})$ is a normalizer for all $i,j$).
\end{enumerate}

\begin{remark}
\label{remark:KerrOrderZeroMap} We check that the map $\varphi$ defined in Equation (18) in the proof of \cite[Theorem 12.4]{Kerr:JEMS} is a castle order zero map (again with a finite sum). To see this, using the notation of that proof, set
\begin{align}
I&:=\{(k,l,m,q,c,t): k=1,\dots,K,\,l=1,\dots,L,\\
\notag
&\qquad\qquad 
m=1,\dots,M,\,
c\in C_{k,l,m}^{(1)},\,q=1,\dots,Q,\,t\in B_{k,l,c,q}\},
\end{align}
and for $(k,l,m,q,c,t) \in I$, set
\begin{align}
\notag
S_{(k,l,m,q,c,t)} &:= \{t\Lambda_{k,1}(c),\dots,t\Lambda_{k,n}(c)\}, \\
f_{(k,l,m,q,c,t)} &:= \frac qQ h_k, \quad \text{and}\quad \theta_{t,i} \equiv 1.
\end{align}
We note that for each $k$, the sets $S_{(k,l,m,q,c,t)}$ are pairwise disjoint\footnote{Suppose that $x \in S_{(k,l,m,q,c,t)}\cap S_{(k,l',m',q',c',t')}$.
Then $x= t\Lambda_{k,i}(c)=t'\Lambda_{k,i'}(c')$ for some $i,i'\in\{1,\dots,n\}$.
We have $t \in B_{k,l,c,q} \subseteq T_{k,l,c}'\subseteq T_{k,l,c}$ and $t' \in B_{k',l',c',q'} \subseteq T_{k',l',c'}'\subseteq T_{k',l',c'}$, while $\Lambda_{k,i}(c)\in C_{k,l,m}^{(i)}\subseteq C_{k,l}$ and $\Lambda_{k',i'}(c') \in C_{k',l',m'}^{(i')}\subseteq C_{k',l'}$.
Since the collection of sets $T_{k,l,c}\gamma$ for $l=1,\dots,L$ and $\gamma\in C_{k,l}$ are disjoint, it follows that $l=l'$, $t=t'$, and $\Lambda_{k,i}(c)=\Lambda_{k,i'}(c')$.
Since $C_{k,l,m}^{(i)}$ for $i=1,\dots,n$ and $m=1,\dots,M$ are pairwise disjoint, it then follows that $i=i'$ and $m=m'$.
Injectivity of $\Lambda_{k,i}$ then implies that $c=c'$.
Finally, the sets $B_{k,l,c,q}$ for $q=1,\dots,Q$ are pairwise disjoint, so $q=q'$.}
and contained in $S_k$ (where $S_k$ is defined in \cite{Kerr:JEMS})\footnote{Continuing from the previous footnote, if $x= t\Lambda_{k,i}(c)$ for some $i\in\{1,\dots,n\}$ then $x \in T_{k,l,c}C_{k,l,m} \subseteq S_k$ (by the use of \cite[Theorem 12.2]{Kerr:JEMS} right after \cite[Eq.\ (14)]{Kerr:JEMS}).}
Since $h_k \in C_0(U_k)$ and the sets $sU_k$ for $k=1,\dots,K$ are pairwise disjoint, it follows that
\begin{equation} (\suppo(f_{(k,l,m,q,c,t)}),S_{(k,l,m,q,c,t)})_{(k,l,m,q,c,t)\in I} \end{equation}
is a castle, and so defines a castle order zero map.

To see that the map it defines is $\varphi$, using that $\Lambda_{k,i,j} = \Lambda_{k,i} \circ \Lambda_{k,j}^{-1}$ and that $\Lambda_{k,j}:C_{k,l,m}^{(1)} \to C_{k,l,m}^{(j)}$ is a bijection, we see that
\begin{align}
\notag
 h_{k,l,c,i,j} &= \sum_{q=1}^Q \sum_{t\in B_{k,l,c,q}} \frac qQ u_{t\Lambda_{k,i,j}(c)c^{-1}t^{-1}}(h_k\circ\alpha_{tc}) \\
&= \sum_{q=1}^Q \sum_{t\in B_{k,l,c,q}} u_{t\Lambda_{k,i}(c)} f_{(k,l,m,q,c,t)} u_{t\Lambda_{k,j}(c)}^*.
\end{align}
Thus
\begin{equation}
\varphi(e_{ij}) = \sum_{(k,l,m,q,c,t) \in I} u_{t\Lambda_{k,i}(c)} f_{(k,l,m,q,c,t)} u_{t\Lambda_{k,j}(c)}^*.
\end{equation}
\end{remark}

\begin{proof}[Proof of Corollary \ref{cor:AlmostFiniteZstable}]
As explained earlier, (ii) $\Rightarrow$ (iii) is Theorem \ref{thm:ZstableComparison}. When $\alpha$  has the small boundary property, (iii) $\Rightarrow$ (i) is \cite[Theorem 6.1]{KerrSzabo}.

(i) $\Rightarrow$ (ii): 
The proof of \cite[Theorem 12.4]{Kerr:JEMS} essentially shows this implication, although since tracial $\mathcal Z$-stability for a sub-$C^*$-algebra is not defined there, it is not explicitly stated in this way.
Let us explain carefully how to obtain (ii) from the proof of \cite[Theorem 12.4]{Kerr:JEMS}.

The proof begins with $a \in C(X)_+$ nonzero, a finite set $\Gamma$ of the unit ball of $C(X)$, a finite symmetric set $F$ of $G$, and a tolerance $\e>0$.
It produces a c.p.c.\ order zero map $\phi:M_n \to C(X)\rtimes_\alpha G$ such that:
\begin{itemize}
\item $\unit_{C(X)\rtimes_\alpha G} -\phi(\unit_n)$ is Cuntz subequivalent to $a$, and
\item $\|[x,\phi(b)]\|<\e$ for all $x \in \Gamma \cup \{u_g:g\in F\}$.
\end{itemize}
As argued in the proof of \cite[Theorem 12.4]{Kerr:JEMS}, since the crossed product is generated by $C(X)$ and the canonical unitaries, given any finite subset $F'$ of $C(X)\rtimes_\alpha G$, by choosing $\Gamma$, $F$, and $\e$ appropriately, the second condition will imply that $\|[x,\phi(b)]\|<\e$ for all $x \in F'$.
By Remark \ref{remark:KerrOrderZeroMap}, the order zero map $\phi$ is in fact a ``castle order zero map'' and therefore $\phi(\mathcal N_{M_n}(D_n)) \subseteq \mathcal N_{C(X)\rtimes_\alpha G}(C(X))$.
Finally, we note that the first of the above conditions is obtained in the proof of \cite[Theorem 12.4]{Kerr:JEMS} by invoking \cite[Lemma 12.3]{Kerr:JEMS}, and so by Remark \ref{rmk:Kerr12.3}, we get the stronger conclusion that $\unit_{C(X)} - \phi(\unit_n) \precsim_{(C(X)\subseteq C(X)\rtimes_\alpha G )} a$.
Hence, we get precisely our definition of $(C(X)\subseteq C(X)\rtimes_\alpha G )$ being tracially $\mathcal Z$-stable, as required.
\end{proof}

We close by noting that normalizer-preserving c.p.c.\ order zero maps from $M_n$ into the crossed product $C(X)\rtimes_\alpha G$ are closely related to castles. In fact, every such a map must be a castle order zero map.

\begin{prop}
	Let $G$ be a countable discrete amenable group, let $X$ be a compact metrizable space, and let $\alpha:G \curvearrowright X$ be a free action.
	If $\phi:M_n \to C(X)\rtimes_\alpha G$ is a c.p.c.\ order zero map such that $\phi(\mathcal N_{M_n}(D_n)) \subseteq \mathcal N_{C(X)\rtimes_\alpha G}(C(X))$, then $\phi$ is a castle order zero map.
\end{prop}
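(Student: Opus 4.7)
The plan is to read the castle off directly from the Fourier decompositions of $v_{i1}:=\phi(e_{i1})$, using the groupoid interpretation of normalizers together with the Winter--Zacharias structure theorem. Set $h_i:=\phi(e_{ii})$, $h:=\phi(\unit_n)=\sum_i h_i$, and $v_{ij}:=\phi(e_{ij})$. Lemma \ref{lem:NormalizersPreserving} gives $h_i\in C(X)_+$, and the structure theorem $\phi(a)=\pi(a)h$ (with $\pi:M_n\to\mathcal M(C^*(\phi(M_n)))$ a $*$-homomorphism commuting with $h$) yields $v_{ij}^*v_{ij}=hh_j$, $v_{ij}v_{ij}^*=hh_i$, $h_ih_j=0$ for $i\neq j$, and crucially $v_{i1}^*v_{j1}=0$ for $i\neq j$.

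Since $\alpha$ is free, the transformation groupoid $G\times X$ is principal, so Proposition \ref{prop:SeminormalizerSection} identifies each $B_{ij}:=\suppo(v_{ij})$ as a bisection with $s(B_{ij})=\suppo(h_j)$ and $r(B_{ij})=\suppo(h_i)$. By Corollary \ref{cor:SeminormalizerCoeffs}, the Fourier coefficients $b_{i,1,g}$ of $v_{i1}$ have pairwise disjoint open supports, so there is a locally constant function $g_{i,1}:\suppo(h_1)\to G$ recording, for each $y$, the group element of the unique arrow in $B_{i1}$ with source $y$. The identity $v_{i1}^*v_{j1}=0$ forces the $g_{i,1}$'s to be pointwise distinct across $i$. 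Take the joint map $\Gamma=(g_{1,1},\dots,g_{n,1}):\suppo(h_1)\to G^n$, let $T$ be its (countable, by second countability of $X$) image, and for each $t=(s_{t,1},\dots,s_{t,n})\in T$ set $V_t:=\Gamma^{-1}(t)$ (clopen in $\suppo(h_1)$) and $S_t:=\{s_{t,1},\dots,s_{t,n}\}$; then $s_{t,1}=e$ and $|S_t|=n$. The castle property splits into two cases: for fixed $t$, the levels $\alpha_{s_{t,i}}(V_t)$ lie in the disjoint sets $\suppo(h_i)$; for fixed $i$ with varying $t$, injectivity of $r|_{B_{i1}}$ makes $\alpha_{s_{t,i}}(V_t)$ and $\alpha_{s_{t',i}}(V_{t'})$ disjoint.

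Finally, set $f_t:=h_1\chi_{V_t}$ (continuous because $V_t$ is clopen in $\suppo(h_1)$ while $h_1$ vanishes on $\partial\suppo(h_1)$, with $\|f_t\|\to 0$ by compactness of $X$ when $T$ is infinite) and let $\theta_{t,i}:V_t\to\mathbb T$ be the continuous phase of $y\mapsto b_{i,1,s_{t,i}}(\alpha_{s_{t,i}}(y))/f_t(y)$ (well-defined with unit modulus because $(v_{i1}^*v_{i1})(y)=h_1(y)^2$ on $\suppo(h_1)$), with $\theta_{t,1}\equiv 1$. Let $\tilde\phi$ be the associated castle order zero map. A direct Fourier-coefficient comparison gives $\phi(e_{i1})=\tilde\phi(e_{i1})$ and $\phi(e_{ii})=\tilde\phi(e_{ii})$ for every $i$. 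The main obstacle is to propagate equality to the remaining matrix units $e_{ij}$, since one cannot literally divide by $h$ in the algebra. The resolution is the structure-theoretic identity $h\cdot\phi(e_{ij})=\phi(e_{i1})\phi(e_{j1})^*$, which is satisfied by both $\phi$ and $\tilde\phi$, so $h\cdot(\phi(e_{ij})-\tilde\phi(e_{ij}))=0$; since the Fourier coefficients of the difference are supported (range-side) in $\suppo(h_i)$ where $h=h_i$ is strictly positive, multiplication by $h$ is injective on them and $\phi=\tilde\phi$ follows.
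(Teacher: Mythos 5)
Your proposal is correct and is essentially the paper's own argument: both proofs read the castle data $(f_t,S_t,\theta_{t,i})$ off the pairwise disjoint Fourier coefficients of the first column (respectively, row) of matrix units via Corollary \ref{cor:SeminormalizerCoeffs}, take a common refinement over $i$ (your clopen level sets $V_t$ of the locally constant maps $g_{i,1}$ are a concrete realization of the paper's disjoint family $(f_t)$ with $|h_{i,g}|=\sum_{t:s(t,i)=g}f_t$), define the $\theta_{t,i}$ as the resulting unimodular phases, and verify the castle condition by combining the one-sided normalizer disjointness for fixed $i$ with orthogonality of the $\phi(e_{ii})$ across $i$. The only genuine difference is one of detail rather than route, and it is a welcome one: where the paper passes from the first row to all matrix units with a one-line appeal to ``since $\phi$ is order zero,'' you make this step explicit through the structure-theorem identity $h\,\phi(e_{ij})=\phi(e_{i1})\phi(e_{j1})^*$ together with injectivity of multiplication by $h$ on coefficients vanishing off $\mathrm{supp}^\circ(h_i)$, thereby filling in a point the paper leaves implicit.
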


\begin{proof}
	Let
	\begin{equation} \phi(e_{1i}) =\sum_{g\in G} h_{i,g} u_g^* \end{equation}
	for $h_{i,g} \in C(X)$.
	Since $\phi(e_{1i})$ is a normalizer, by Corollary \ref{cor:SeminormalizerCoeffs}, the collections $\{\suppo(h_{i,g}):g\in G\}$ and $\{\suppo(  h_{i,g}\circ \alpha_g^{-1}  ):g\in G\}$ are both pairwise disjoint.
	Since $\phi$ is order zero, we have
	\begin{equation} \phi(e_{11}) = (\phi(e_{1i})\phi(e_{1i})^*)^{\frac{1}{2}}=\sum_{g\in G} |h_{i,g}| \end{equation}
	(where the sum is orthogonal and norm-convergent).
	Since this is true for all $i$, it follows that we may find a pairwise disjoint family $(f_t)_{t\in T}$ in $C(X)_+$ (indexed by some countable set $T$) along with a function $s:T\times \{1,\dots,n\} \to G$ such that
	\begin{equation} |h_{i,g}| = \sum_{t\in T: s(t,i)=g} f_t. \end{equation}
	Since the orthogonal sum $\sum_{g\in G} |h_{i,g}| = \sum_{t\in T} f_t$ converges, we must have $\|f_t\| \to 0$ as $t\to\infty$.
	For each $t\in T$ and $i=1,\dots,n$, we have that $f_t=|h_{i,s(t,i)}|$ on $\suppo(f_t)$ (by the orthogonality of the $f_t$), so we may define $\theta_{t,i}:\suppo(f_t) \to \mathbb T$ by
	\begin{equation} \theta_{t,i}(x):=\frac{f_t(x)}{h_{i,s(t,i)}(x)}, \end{equation}
	and this is a continuous function.
	We obtain
	\begin{equation} h_{i,g} = \sum_{t\in T:s(t,i)=g} \bar\theta_{t,i}f_t. \end{equation}
	
	Since $\phi(e_{11})\in C(X)_+$, we have $s(t,1)=e$ and $\theta_{t,1}\equiv 1$ for all $t\in T$.
	We may therefore rewrite
	\begin{equation} \phi(e_{1j}) = \sum_{t\in T} \bar\theta_{t,j}f_tu_{s(t,j)}^* =\sum_{t\in T} u_{s(t,1)}\theta_{t,i}\bar\theta_{t,j}f_tu_{s(t,j)}^*. \end{equation}
	Since $\phi$ is order zero, we also obtain
	\begin{equation} \phi(e_{ij}):=\sum_{t\in T} u_{s(t,i)}\theta_{t,i}\bar\theta_{t,j}f_tu_{s(t,j)}^*. \end{equation}
	It remains only to show that when we set
	\begin{equation} S_t:=\{s(t,1),\dots,s(t,n)\}, \end{equation}
	we have that $((\suppo(f_t),S_t))_{t\in T}$ is a castle.
	
	For this, first since
	\begin{equation} \phi(e_{i1}) = \sum_{t\in T} u_{s(t,i)}\theta_{t,i}f_t = \sum_{t\in T} ((\theta_{t,i}f_t)\circ\alpha_{s(t,i)}^{-1})u_{s(t,i)} \end{equation}
	is a normalizer, it follows from Corollary \ref{cor:SeminormalizerCoeffs} (and the fact that the $f_t$ are orthogonal) that
	\begin{equation} \{\suppo(f_t\circ\alpha_{s(t,i)}^{-1}): t \in T\} = \{\alpha_{s(t,i)}(\suppo(f_t)):t\in T\} \end{equation}
	is pairwise orthogonal.
	Moreover, we compute
	\begin{equation} \phi(e_{ii}) = \sum_{t\in T} u_{s(t,i)}f_tu_{s(t,i)}^* = \sum_{t\in T} f_t\circ\alpha_{s(t,i)}^{-1}, \end{equation}
	so using the orthogonality of the above family,
	\begin{equation} \suppo(\phi(e_{ii})) = \coprod_{t\in T} \alpha_{s(t,i)}(\suppo(f_t)). \end{equation}
	Since $\phi$ is order zero, we know that $\phi(e_{ii})$ and $\phi(e_{jj})$ are orthogonal for all $i\neq j$.
	Consequently, we find that the entire family
	\begin{equation} \{\alpha_{s(t,i)}(\suppo(f_t)):t\in T, i=1,\dots,n\} \end{equation}
	is pairwise disjoint, which means that $((\suppo(f_t),S_t)_{t\in T}$ is a castle.
\end{proof}

\newcommand{\cstar}{$C^*$}


\begin{thebibliography}{10}

\bibitem{CETW}
Jorge Castillejos, Samuel Evington, Aaron Tikuisis, and Stuart White.
\newblock Uniform property {$\Gamma$}.
\newblock {\em Int. Math. Res. Not. IMRN}, pages Art. ID rnaa282, 33, 2021.
\newblock arXiv:1912.04207.

\bibitem{Connes}
Alain Connes.
\newblock Classification of injective factors. {C}ases {$II_{1},$} {$II_{\infty
  },$} {$III_{\lambda },$} {$\lambda \not=1$}.
\newblock {\em Ann. of Math. (2)}, 104(1):73--115, 1976.

\bibitem{CFW}
Alain Connes, Jacob Feldman, and Benjamin Weiss.
\newblock An amenable equivalence relation is generated by a single
  transformation.
\newblock {\em Ergodic Theory Dynamical Systems}, 1(4):431--450 (1982), 1981.

\bibitem{Dye}
Henry~A. Dye.
\newblock On groups of measure preserving transformations. {I}.
\newblock {\em Amer. J. Math.}, 81:119--159, 1959.

\bibitem{FM2}
Jacob Feldman and Calvin~C. Moore.
\newblock Ergodic equivalence relations, cohomology, and von {N}eumann
  algebras. {II}.
\newblock {\em Trans. Amer. Math. Soc.}, 234(2):325--359, 1977.

\bibitem{GiolKerr}
Julien Giol and David Kerr.
\newblock Subshifts and perforation.
\newblock {\em J. Reine Angew. Math.}, 639:107--119, 2010.

\bibitem{GKPT:book}
Thierry Giordano, David Kerr, N.~Christopher Phillips, and Andrew Toms.
\newblock {\em Crossed products of {$C^*$}-algebras, topological dynamics, and
  classification}.
\newblock Advanced Courses in Mathematics. CRM Barcelona.
  Birkh\"{a}user/Springer, Cham, 2018.
\newblock Lecture notes based on the course held at the Centre de Recerca
  Matem\`atica (CRM) Barcelona, June 14--23, 2011, Edited by Francesc Perera.

\bibitem{HirshbergOrovitz}
Ilan Hirshberg and Joav Orovitz.
\newblock Tracially {$\mathcal{Z}$}-absorbing {\cstar}-algebras.
\newblock {\em J. Funct. Anal.}, 265(5):765--785, 2013.

\bibitem{JiangSu}
Xinhui Jiang and Hongbing Su.
\newblock On a simple unital projectionless {\cstar}-algebra.
\newblock {\em Amer. J. Math.}, 121(2):359--413, 1999.

\bibitem{Kerr:JEMS}
David Kerr.
\newblock Dimension, comparison, and almost finiteness.
\newblock {\em J. Eur. Math. Soc. (JEMS)}, 22(11):3697--3745, 2020.

\bibitem{KerrSzabo}
David Kerr and G\'{a}bor Szab\'{o}.
\newblock Almost finiteness and the small boundary property.
\newblock {\em Comm. Math. Phys.}, 374(1):1--31, 2020.

\bibitem{Kir06}
Eberhard Kirchberg.
\newblock Central sequences in {\cstar}-algebras and strongly purely infinite
  algebras.
\newblock In {\em Operator {A}lgebras: {T}he {A}bel {S}ymposium 2004}, volume~1
  of {\em Abel Symp.}, pages 175--231. Springer, Berlin, 2006.

\bibitem{Kumjian:CJM}
Alexander Kumjian.
\newblock On {\cstar}-diagonals.
\newblock {\em Canad. J. Math.}, 38(4):969--1008, 1986.

\bibitem{LindenstraussWeiss}
Elon Lindenstrauss and Benjamin Weiss.
\newblock Mean topological dimension.
\newblock {\em Israel J. Math.}, 115:1--24, 2000.

\bibitem{Ma:ETDS}
Xin Ma.
\newblock A generalized type semigroup and dynamical comparison.
\newblock {\em Ergodic Theory and Dynamical Systems}, page 1–18, 2020.

\bibitem{Matui:PLMS}
Hiroki Matui.
\newblock Homology and topological full groups of \'{e}tale groupoids on
  totally disconnected spaces.
\newblock {\em Proc. Lond. Math. Soc. (3)}, 104(1):27--56, 2012.

\bibitem{MS:Acta}
Hiroki Matui and Yasuhiko Sato.
\newblock Strict comparison and {$\mathcal {Z}$}-absorption of nuclear
  {\cstar}-algebras.
\newblock {\em Acta Math.}, 209(1):179--196, 2012.

\bibitem{MatuiSato:Duke}
Hiroki Matui and Yasuhiko Sato.
\newblock Decomposition rank of {UHF}-absorbing {$C^*$}-algebras.
\newblock {\em Duke Math. J.}, 163(14):2687--2708, 2014.

\bibitem{MvN}
Francis~J. Murray and John von Neumann.
\newblock On rings of operators. {IV}.
\newblock {\em Ann. of Math. (2)}, 44:716--808, 1943.

\bibitem{Renault:Book}
Jean Renault.
\newblock {\em A groupoid approach to {\cstar}-algebras}, volume 793 of {\em
  Lecture Notes in Mathematics}.
\newblock Springer, Berlin, 1980.

\bibitem{Renault:IMSB}
Jean Renault.
\newblock Cartan subalgebras in {\cstar}-algebras.
\newblock {\em Irish Math. Soc. Bull.}, (61):29--63, 2008.

\bibitem{Rordam:JFA92}
Mikael R{\o}rdam.
\newblock On the structure of simple {\cstar}-algebras tensored with a
  {UHF}-algebra. {II}.
\newblock {\em J. Funct. Anal.}, 107(2):255--269, 1992.

\bibitem{Rordam:IJM}
Mikael R{\o}rdam.
\newblock The stable and the real rank of {$\mathcal Z$}-absorbing
  {\cstar}-algebras.
\newblock {\em Internat. J. Math.}, 15(10):1065--1084, 2004.

\bibitem{RordamWinter}
Mikael R{\o}rdam and Wilhelm Winter.
\newblock The {J}iang-{S}u algebra revisited.
\newblock {\em J. Reine Angew. Math.}, 642:129--155, 2010.

\bibitem{Sims:Notes}
Aidan Sims.
\newblock {\'E}tale groupoids and their {$C^*$}-algebras.
\newblock https://arxiv.org/abs/1710.10897.

\bibitem{TomsWinter}
Andrew~S. Toms and Wilhelm Winter.
\newblock Strongly self-absorbing {\cstar}-algebras.
\newblock {\em Trans. Amer. Math. Soc.}, 359(8):3999--4029, 2007.

\bibitem{Villadsen}
Jesper Villadsen.
\newblock Simple {\cstar}-algebras with perforation.
\newblock {\em J. Funct. Anal.}, 154(1):110--116, 1998.

\bibitem{Winter12}
Wilhelm Winter.
\newblock Nuclear dimension and {$\mathcal{Z}$}-stability of pure
  {\cstar}-algebras.
\newblock {\em Invent. Math.}, 187(2):259--342, 2012.

\bibitem{WZ:Muenster}
Wilhelm Winter and Joachim Zacharias.
\newblock Completely positive maps of order zero.
\newblock {\em M\"{u}nster J. Math.}, 2:311--324, 2009.

\end{thebibliography}
\end{document}